\newtheorem{theorem}[subsection]{Theorem}
\newtheorem{proposition}[subsection]{Proposition}
\newtheorem{corollary}[subsection]{Corollary}
\newtheorem{lemma}[subsection]{Lemma}
\theoremstyle{definition}
\newtheorem{definition}[subsection]{Definition}
\newcommand{\Z}{{\mathbb Z}}
\newcommand{\R}{\mathbb{R}}
\renewcommand{\P}{{\mathbb P}}
\newcommand{\Q}{{\mathbb Q}}
\title{Bousfield-Kan Completion and Very Large Groups}
\author{ Jaime Benabent Guerrero, Ramón Flores}
\date{\today}
\def\@tocline#1#2#3#4#5#6#7{\relax
  \ifnum #1>\c@tocdepth 
  \else
    \par \addpenalty\@secpenalty\addvspace{#2}%
    \begingroup \hyphenpenalty\@M
    \@ifempty{#4}{%
      \@tempdima\csname r@tocindent\number#1\endcsname\relax
    }{%
      \@tempdima#4\relax
    }%
    \parindent\z@ \leftskip#3\relax \advance\leftskip\@tempdima\relax
    \rightskip\@pnumwidth plus4em \parfillskip-\@pnumwidth
    #5\leavevmode\hskip-\@tempdima
      \ifcase #1
       \or\or \hskip 1em \or \hskip 2em \else \hskip 3em \fi%
      #6\nobreak\relax
    \hfill\hbox to\@pnumwidth{\@tocpagenum{#7}}\par
    \nobreak
    \endgroup
  \fi}
\begin{document}
	
	\begin{abstract}
		We establish new criteria for the $R$-badness of a space and apply it to the case of closed surfaces.
	\end{abstract}
	
	\maketitle

    \section{Introduction}
    \noindent
    


The Bousfield–Kan $R$-completion, developed in \cite{BK72}, provides a unifying framework for understanding how spaces behave with respect to a chosen ring of coefficients $R$, generalizing both $p$-completion and rationalization within a single homotopical setting. When $R =\mathbb{Z}/p$, the $R$-completion isolates the $p$-primary information of a space, capturing precisely those features detected by mod-$p$ (co)homology. This construction has played a fundamental role in homotopy theory, notably in the study of classifying spaces of finite and profinite groups, in the analysis of $p$-local homotopy types, and in the characterization of maps inducing mod-$p$ homology equivalences. It is also central to the proof of the Sullivan conjecture and to the homotopy classification of finite $H$-spaces. Moreover, the theory provides a geometric analogue of $p$-adic completion for groups, linking the $R$-completion of a space with the $p$-adic completion of its fundamental group in the nilpotent case.

When $R$ is a subring of $\mathbb{Q}$, such as $\mathbb{Z}[1/p]$ or $\mathbb{Q}$ itself, $R$-completion is related to rational localization. In these cases, the process annihilates torsion phenomena invisible to $R$-homology and produces a more algebraic homotopy type that can often be described via differential graded Lie algebras or minimal models. This rational completion framework underlies rational homotopy theory, where the $\mathbb{Q}$-completion of a simply connected space encodes its entire homotopy type in purely algebraic terms. Furthermore, $R$-completion for subrings of $\mathbb{Q}$ provides the essential bridge in the arithmetic fracture square, connecting rational and $p$-adic information and allowing global homotopy types to be reconstructed from their local components.

In a broader sense, the $R$-completion formalism offers a conceptual mechanism for studying localizing phenomena simultaneously across arithmetic and homological contexts. It has profound applications to mapping spaces, localizations of ring spectra, and the interplay between algebraic and topological completions. Whether isolating $p$-primary structure or extracting rational information, $R$-completion remains one of the most powerful tools for decomposing complex homotopy types into algebraically manageable local parts.

However, unlike Bousfield homological localization, the Bousfield–Kan completion functor is not always idempotent on spaces; that is, applying it twice does not necessarily yield the same result as applying it once. From the beginning, Bousfield-Kan sought conditions under which the completion functor becomes idempotent, and many of these are listed in their foundational monograph. It is precisely on such ``good” spaces that completion behaves predictably. Yet there remain many spaces for which idempotency is not known, and explicit examples of spaces that fail it —the so-called $R$-bad spaces— are rare and of particular interest.

This paper addresses that gap by establishing a general criterion for detecting $R$-badness. Our main result (Theorem \ref{th:main}) states that if $X$ is a connected space whose second integral homology group $H_2(X;\mathbb{Z})$ is countable and whose fundamental group admits a surjection onto a non-commutative free group (a \emph{very large} group, see Definition \ref{Def:very large}), then $X$ is $R$-bad for any $R=\mathbb{Z}/p$ or any subring $R\subseteq \mathbb{Q}$. In particular, the classifying spaces of these groups are $R$-bad. This criterion, which uses and generalizes recent work of Ivanov-Mikhailov \cite{IM19} showing that the wedge of two circles is $R$-bad, provides a systematic framework for producing new examples of non-idempotent completions.

Our initial motivation arose from a question posed by A. Murillo \cite{Mur20} concerning whether compact orientable surfaces are ``good” spaces with respect to rational completion. Surprisingly, even for such elementary geometric objects, only the sphere and the torus were known to be $R$-good, while the cases of bigger genus remained open. By observing that the fundamental groups of compact surfaces (with very few exceptions) are very large, and by combining this fact with the aforementioned badness criterion, we completely resolve Murillo’s question. We prove that all compact orientable surfaces other than the sphere and the torus are $R$-bad for $R=\mathbb{Z}/p$ and for subrings of $\mathbb{Q}$, and that the same holds for all compact non-orientable surfaces of genus bigger than 3; the cases of genus 1 and 2 where previously known, while our methods do not fit in the case of genus 3, which remains unsolved. Moreover, our methods extend to a wide class of very large groups satisfying mild finiteness assumptions, thus offering a flexible and general approach to producing new examples of $R$-bad spaces.

The paper is organized as follows. In Section 2, we recall the main definitions and results concerning Bousfield–Kan $R$-completion, together with a review of known examples of $R$-good and $R$-bad spaces. In Section 3 we prove our main theorem, establishing the criterion for $R$-badness, while in Section 4 this criterion is applied to the case of closed surfaces. Finally, Section 5 extends the arguments to broader families of very large groups such as general Artin groups, right-angled Artin groups, Bestvina–Brady groups, and graph braid groups, thereby illustrating the scope and utility of our criterion.

	\section{Bousfield-Kan $R$-Completion}
	\label{background}
	\noindent


	In this section, we recall the main definitions and results concerning the Bousfield-Kan $R$-completion and compile some currently known examples of $R$-good and $R$-bad spaces for different choices of a ring $R$.
	
	\subsection{$R$-completion}
    
	In \cite{BK72} is introduced, for any ring $R$, a $R$-\emph{completion} in the category of spaces, i.e. an endofunctor $R_{\infty} : \mathcal{S} \to \mathcal{S}$ together with a natural transformation $\epsilon: 1_\mathcal{S} \to R_{\infty}$, called the coaugmentation, such that $R_{\infty}\epsilon_X = \epsilon_{R_{\infty}}$. This functor is defined, up to homotopy, for the following property:
	
	\begin{proposition}
		A morphism of spaces, $f: X \to Y$, is a $R$-homology equivalence if and only if the morphism given by the $R$-completion, $R_{\infty}f : R_{\infty}X \rightarrow R_{\infty}Y$, is a homotopy equivalence.
	\end{proposition}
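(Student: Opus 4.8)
The plan is to reduce the statement to one computational property of the functor underlying $R_\infty$. Recall the construction from \cite{BK72}: write $R$ also for the triple on $\mathcal{S}$ whose underlying functor sends a space $Z$ to the underlying space of the free simplicial $R$-module on $Z$, with unit $\eta\colon\mathrm{id}\to R$ and multiplication $\mu\colon RR\to R$; then $R^{\bullet+1}X$ is the associated standard cosimplicial resolution $[s]\mapsto R^{s+1}X$, one sets $R_\infty X=\mathrm{Tot}(R^{\bullet+1}X)$, and $\epsilon_X$ is the coaugmentation induced by $\eta$. The single fact I lean on is the Dold--Kan computation $\pi_n(RZ)\cong H_n(Z;R)$ for every space $Z$ (reduced homology, using the reduced free $R$-module, in the pointed connected setting). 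Two consequences: $R$ sends weak equivalences to weak equivalences, and --- crucially --- $Rg$ is a weak equivalence if and only if $g$ is an $R$-homology equivalence. So already the bottom stage $R$ of the resolution detects exactly the class of $R$-homology equivalences.

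For the ``if'' direction, let $f$ be an $R$-homology equivalence. Then $Rf$ is a weak equivalence, and since $R$ preserves weak equivalences, iterating shows that $R^{s+1}f\colon R^{s+1}X\to R^{s+1}Y$ is a weak equivalence for every $s\ge 0$; that is, $R^{\bullet+1}f$ is a levelwise weak equivalence of cosimplicial spaces. Each $R^{s+1}X$ is a simplicial $R$-module, hence a Kan complex, and the cosimplicial space $R^{\bullet+1}X$ is Reedy fibrant (this is arranged in \cite{BK72}, using the grouplike structure carried by a cosimplicial simplicial $R$-module, and it is precisely what makes $\mathrm{Tot}$ homotopy invariant here). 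Hence $\mathrm{Tot}$ carries $R^{\bullet+1}f$ to a weak equivalence, so $R_\infty f$ is a weak equivalence. Once the fibrancy statement is granted, this direction is formal.

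For the converse, assume $R_\infty f$ is a weak equivalence; I want $f$ to be an $R$-homology equivalence, equivalently $Rf$ a weak equivalence. The obvious handle is the natural projection $R_\infty X=\mathrm{Tot}(R^{\bullet+1}X)\to\mathrm{Tot}_0(R^{\bullet+1}X)=RX$ onto the bottom of the $\mathrm{Tot}$-tower, which sits in a commuting square with $R_\infty f$ on one side and $Rf$ on the other. What one cannot do is invoke two-out-of-three: this projection is in general very far from a weak equivalence, and the discrepancy between $RX$ and $R_\infty X$ is exactly the $R$-goodness phenomenon that the rest of the paper shows can be large. Instead one must use the finer structure of the resolution --- the homotopy (and mod-$R$ homology) spectral sequence of $R^{\bullet+1}X$ together with its convergence, and the fact that each $R^{s+1}X$ is $R$-complete (the cobar resolution of a free $R$-module splits via $\mu$) --- to argue that a weak equivalence on total spaces is already forced at the bottom stage $RX$. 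Recovering the $R$-homology type of $X$ from $R_\infty X$ in this generality is the crux, and I expect it to be the main obstacle; it is the substantive input one imports from \cite{BK72}, with everything around it amounting to bookkeeping.
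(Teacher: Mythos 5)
The paper itself offers no proof of this proposition: it is quoted from \cite{BK72} (Ch.~I, 5.5) as the property characterizing the completion functor, so there is nothing in the text to compare your argument against and it must stand on its own. The direction you do carry out ($f$ an $R$-homology equivalence $\Rightarrow$ $R_\infty f$ an equivalence) is the standard argument and is fine: $\pi_*(RZ)\cong \tilde H_*(Z;R)$ shows that $Rf$, and hence every $R^{s+1}f$, is a weak equivalence, and $\mathrm{Tot}$ of a levelwise weak equivalence of Reedy fibrant cosimplicial spaces is a weak equivalence between fibrant objects, hence a homotopy equivalence.

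The converse you explicitly leave open, and that is a genuine gap rather than bookkeeping; the appeal to the homotopy spectral sequence and its convergence will not assemble into a proof, since convergence there is precisely what fails for the bad spaces this paper is about. The missing idea is a retraction, not an equivalence. Let $p_X\colon R_\infty X=\mathrm{Tot}(R^{\bullet+1}X)\to \mathrm{Tot}_0=RX$ be the projection you already introduced; it satisfies $p_X\circ\epsilon_X=\eta_X$. Then the composite
$$
RX \xrightarrow{\ R\epsilon_X\ } RR_\infty X \xrightarrow{\ \mu_X\circ Rp_X\ } RX
$$
equals $\mu_X\circ R\eta_X=\mathrm{id}_{RX}$ by the triple axioms, and every map involved is natural in $X$. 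Hence $Rf$ is a natural retract of $RR_\infty f$. If $R_\infty f$ is a homotopy equivalence, then $RR_\infty f$ is a weak equivalence, a retract of a weak equivalence is a weak equivalence, so $Rf$ is one, i.e.\ $f$ is an $R$-homology equivalence. This is exactly the point where your stated objection to using $p_X$ dissolves: one never needs $p_X$ to be close to an equivalence, only to supply the section--retraction pair after applying $R$ once more.
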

	
 	This construction, which is performed by means of an inverse homotopy limit of a tower of spaces, has an important disadvantage with regard to its respective homological localization (see \cite{Bou75}): it is not always idempotent, i.e. in general $R_{\infty}X$ is not always homotopy equivalent to $R_{\infty}R_{\infty}X$.
    This motivated the following definition, with its corresponding proposition:
	
	\begin{definition}
		Let $X$ be a space.
		\begin{enumerate}
			\item $X$ is called $R$-\emph{complete} if $\epsilon_X$ is a homotopy equivalence.
			
			\item $X$ is called $R$-\emph{good} if $\epsilon_X$ is an $R$-homology equivalence.
			
			\item $X$ is called $R$-\emph{bad} if it is not $R$-good.
		\end{enumerate}
	\end{definition}
	
	\begin{proposition}
		Let $X$ be a space.
		The following conditions are equivalent:
		\begin{itemize}
			\item $X$ is $R$-good.
			\item $R_{\infty}X$ is $R$-good.
			\item $R_{\infty}X$ is $R$-complete.
		\end{itemize}
	\end{proposition}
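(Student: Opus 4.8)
The plan is to label the three conditions (i) ``$X$ is $R$-good'', (ii) ``$R_\infty X$ is $R$-good'' and (iii) ``$R_\infty X$ is $R$-complete'', to rewrite each of them in terms of the coaugmentation $\epsilon$, and then to deduce all the equivalences from the previous Proposition together with the naturality of $\epsilon$, isolating the one step that genuinely uses the construction of $R_\infty$. Unwinding the definitions, (i) says that $\epsilon_X\colon X\to R_\infty X$ is an $R$-homology equivalence, (ii) says that $\epsilon_{R_\infty X}\colon R_\infty X\to R_\infty R_\infty X$ is an $R$-homology equivalence, and (iii) says that $\epsilon_{R_\infty X}$ is a homotopy equivalence; in particular (iii)$\Rightarrow$(ii) is immediate. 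For (i)$\Leftrightarrow$(iii) I would use the coaugmentation identity $R_\infty\epsilon_X=\epsilon_{R_\infty X}$ and the previous Proposition applied to $f=\epsilon_X$: the map $\epsilon_X$ is an $R$-homology equivalence if and only if $R_\infty\epsilon_X=\epsilon_{R_\infty X}$ is a homotopy equivalence, i.e. if and only if $R_\infty X$ is $R$-complete. Hence (i)$\Leftrightarrow$(iii), and the whole statement reduces to proving (ii)$\Rightarrow$(iii).

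To that end I would first record a purely formal lemma: the class of $R$-complete spaces is closed under homotopy retracts. Indeed, if $W$ is $R$-complete and $i\colon Z\to W$, $r\colon W\to Z$ satisfy $r\circ i\simeq\mathrm{id}_Z$, pick a homotopy inverse $h$ of the equivalence $\epsilon_W$ and set $\sigma:=r\circ h\circ R_\infty i\colon R_\infty Z\to Z$; using the naturality squares of $\epsilon$ for $i$ and for $r$ together with the fact that $R_\infty$ respects homotopies one gets $\epsilon_Z\circ\sigma\simeq R_\infty(r\circ i)\simeq\mathrm{id}_{R_\infty Z}$ and $\sigma\circ\epsilon_Z\simeq r\circ i\simeq\mathrm{id}_Z$, so $\epsilon_Z$ is a homotopy equivalence and $Z$ is $R$-complete. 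Now, assuming (ii), the previous Proposition applied to $f=\epsilon_{R_\infty X}$ together with $R_\infty\epsilon_{R_\infty X}=\epsilon_{R_\infty R_\infty X}$ shows that $\epsilon_{R_\infty R_\infty X}$ is a homotopy equivalence, i.e. that $R_\infty R_\infty X$ is $R$-complete. By the lemma it is therefore enough to realise $R_\infty X$ as a homotopy retract of $R_\infty R_\infty X$ via $\epsilon_{R_\infty X}$.

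This last point is the only place where one must use the actual construction of $R_\infty$ and not merely its formal properties. Writing $R_\infty X$ as the totalization of the cosimplicial space $R^\bullet X$ attached to the $R$-linearization functor $X\mapsto RX$ (the free simplicial $R$-module on $X$, which carries a monad structure), the relevant input is that after one application of this functor the cosimplicial space $R(R^\bullet X)$ acquires an extra codegeneracy coming from the monad multiplication; iterating this and composing with the natural assembly maps $R^{k}(\mathrm{Tot}\,R^\bullet X)\to\mathrm{Tot}(R^{k}R^\bullet X)$ yields a natural map $R_\infty R_\infty X\to R_\infty X$ splitting $\epsilon_{R_\infty X}$ up to homotopy (this is essentially contained in \cite{BK72}; it is the same mechanism that exhibits products of Eilenberg--MacLane spaces with $R$-module coefficients as $R$-complete). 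Granting this splitting, $R_\infty X$ becomes a homotopy retract of the $R$-complete space $R_\infty R_\infty X$, hence is $R$-complete by the lemma; this is (iii), and it closes the chain (i)$\Leftrightarrow$(iii)$\Leftrightarrow$(ii). The main obstacle is exactly the construction of this natural homotopy retraction: every other step is formal, using only the previous Proposition and the naturality of $\epsilon$.
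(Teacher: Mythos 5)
This proposition is stated in the paper as recalled background from \cite{BK72} and is given no proof there, so there is nothing internal to compare against; judged on its own terms, your argument is essentially the standard Bousfield--Kan proof and is sound. The formal reductions are correct: the reformulations of the three conditions, the implication (iii)$\Rightarrow$(ii), the equivalence (i)$\Leftrightarrow$(iii) via the characterizing property of $R_\infty$ applied to $f=\epsilon_X$ together with the identity $R_\infty\epsilon_X=\epsilon_{R_\infty X}$, and the lemma that $R$-complete spaces are closed under homotopy retracts (your verification via $\sigma=r\circ h\circ R_\infty i$ and the two naturality squares is correct, granted that $R_\infty$ preserves homotopies). You also correctly isolate the one genuinely non-formal input, namely the natural map $\psi\colon R_\infty R_\infty X\to R_\infty X$ with $\psi\circ\epsilon_{R_\infty X}\simeq\mathrm{id}$, coming from the extra codegeneracies that the monad multiplication induces on $R(R^\bullet X)$ together with the assembly maps $R^k\,\mathrm{Tot}\to\mathrm{Tot}\,R^k$; this is indeed in \cite{BK72} (Ch.~I, \S 5 and the cosimplicial machinery of Ch.~X--XI), and you are explicit that you are citing rather than proving it, which is the honest thing to do. Two small caveats: first, the identity $R_\infty\epsilon_X=\epsilon_{R_\infty X}$ that you (and the paper) use on the nose is, in Bousfield--Kan's actual construction, exactly the kind of comparison between the two maps $R_\infty X\to R_\infty R_\infty X$ that requires the same cosimplicial bookkeeping as your retraction $\psi$, so in a fully self-contained write-up these two points should be treated together rather than one taken as definitional and the other as the crux; second, a citation for the splitting should be made precise if this were to be written out. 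Neither affects the correctness of the overall argument.
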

	
	In particular, if we define inductively $R^0_{\infty} := 1_\mathcal{S}$ and $R^n_{\infty} := R_{\infty} \circ R^{n-1}_{\infty}$ for $n \geq 1$, for every space we either have an infinite chain
	$$
	R_{\infty}X \stackrel{\simeq}{\to}
	R^2_{\infty}X \stackrel{\simeq}{\to}
	R^3_{\infty}X \stackrel{\simeq}{\to} \cdots
	$$
	of homotopy equivalences or we never have a homotopy equivalence in the infinite chain
	$$
	X \stackrel{\not\simeq}{\to}
	R_{\infty}X \stackrel{\not\simeq}{\to}
	R^2_{\infty}X \stackrel{\not\simeq}{\to}
	R^3_{\infty}X \stackrel{\not\simeq}{\to} \cdots.
	$$
	
	\subsection{The choice of the ring}. Unless explicit mention against, the result of this paper are valid when $R=\mathbb{Z}/p$ for $p$ prime, or $R$ is a subring of the rationals. The reason of this is that $R$-completion is preserved when we change $R$ by its \emph{core}, i.e. the maximal solid ring of $R$. Recall that a ring $R$ is called \emph{solid} if for any other ring $S$ there exists at most one morphism $R \to S$. Then, the classification of the solid rings implies that basically the cases of interest from the point of view of $R$-completion are  $R=\mathbb{Z}/p$ for $p$ prime, or $R$ is a subring of the rationals. See \cite[Ch. I, 9.5]{BK72} for a discussion of the subject, and \cite{Ben25} for a modern approach that contains a complete classification of the solid rings.

    \subsection{The $R$-completion of a group}
    In the proof of the main result we will make use of the $R$-completion of a group, particularly we will deal with the $R$-completion of $F_2$ with respect to $R = \Z/p\Z$ for $p$ prime or $R \subseteq \Q$ a subring of the rationals.

    \begin{definition}
        Let $R$ be a ring.
        A group $G$ is called \textbf{$R$-nilpotent} if $G$ has a finite central series such that the succesive quotients admit an $R$-module structure.
    \end{definition}

Now fix a ring $R$ and a group $G$. Consider the category $\mathcal{N}_G$ whose objects are the homomorphisms $G \to N$, where $N$ is an $R$-nilpotent group, and whose morphisms are given by the commutative triangles $G \to N \to N'$. Consider also the functor $F$ from $\mathcal{N}_G$ to the category of groups that sends $G \to N$ to $N$ and the triangle $G \to N \to N'$ to the corresponding homomorphism $N \to N'$. Then we have the following definition.

    \begin{definition}
        The (group) \textbf{$R$-completion} $\widehat{G}_R$ of $G$ is the inverse limit of the functor $F$.
    \end{definition}

This construction is described in \cite{BK72}, Chapter IV, as well as its relation with the completion of spaces.
    
	\subsection{Criteria for $R$-goodness and $R$-badness}
    In this subsection, we give general criteria for a space to be $R$-good or $R$-bad.
    We start with some definitions.
    
    \begin{definition}
        A group $G$ is called \textbf{$R$-perfect} if $H_1(G; R) = 0$, i.e. if
        $$
        R \otimes G^{ab} = 0.
        $$
    \end{definition}

    \begin{definition}
    \begin{enumerate}
        \item 
        A group is called \textbf{nilpotent} if its $\mathbb{Z}$-nilpotent, i.e. its lower central series stabilizes after a finite length at the trivial subgroup.

        \item
        A group is called \textbf{virtually nilpotent} if it has a nilpotent normal subgroup of finite index.

    \end{enumerate}
    \end{definition}



    For the following definitions we refer to \cite{DDK77}:

    \begin{definition}
    \begin{enumerate}
        \item
        A space is called \textbf{nilpotent} if it is connected and its fundamental group is a nilpotent group whose action on the higher homotopy groups is nilpotent.
        Equivalently, it is a space for which, up to homotopy, the Postnikov tower can be refined to a tower of principal fibrations.

        \item
        A space is called \textbf{virtually nilpotent} if it is connected and its fundamental group is a virtually nilpotent group whose action on the higher homotopy groups is virtually nilpotent.
        Equivalently, it is a space for which each Postnikov stage has a finite covering space which is nilpotent.
    \end{enumerate}
    \end{definition}

    \vspace{1pt}
    \textbf{Criteria for $R$-good spaces}
    \begin{itemize}
        \item[G1] Any space with finite homotopy groups in each dimension is $R$-good for $R \subseteq \Q$ and $R = \Z/p\Z$, as shown in \cite[Ch. VII, 4.3]{BK72}.\\

        \item[G2] Any space whose fundamental group is finite is $R$-good for $R = \Z/p\Z$, as shown in \cite[Ch. VII, 5.1]{BK72}.\\

        \item[G3] Any space with an $R$-perfect fundamental group is $R$-good for $R \subseteq \Q$ and $R = \Z/p\Z$, as shown in \cite[Ch. VII, 3.2]{BK72}.\\
    
        \item[G4] Nilpotent spaces are $R$-good for $R \subseteq \Q$ and $R = \Z/p\Z$, as shown by \cite[Ch. V, 3.4]{BK72} and \cite[Ch. VI, 5.3]{BK72}.\\

        \item[G5] Virtually nilpotent spaces are $R$-good for $R = \Q$ and $R = \Z/p\Z$, as shown in \cite[Proposition 3.4]{DDK77}.\\

        \item[G6] A $p$-seminilpotent space is $\Z/p\Z$-good, as shown in \cite[Theorem 4.3]{Bo92}.\\
    \end{itemize}
    
    \textbf{$R$-bad spaces.}

    Contrary to the previous case, no general criteria can be found in the literature to state that a space is $R$-bad, but only isolated examples, which we next will review. The main result of this paper, which is based in turn in the $R$-badness of the wedge of circunferences $S^1 \vee S^1$, provides one such criterion.

    \subsection{Examples of $R$-good and $R$-bad spaces}
    We now give specific examples of $R$-good and $R$-bad spaces present in the literature.

    \vspace{5pt}
    \textbf{$\bullet$ Wedge of circunferences.}
    It was first shown in \cite[Ch. IV, 5.4]{BK72} that a wedge of a countable number of circunferences, i.e. $K(F, 1)$ where $F$ is a free group in a $\aleph_0$ number of generators, is $R$-bad for $R = \Z$ and $R = \Z/p\Z$.
    
    Subsequent work by Ivanov and Mikhailov, as in Lemma \ref{lmm:aux1} below, would show that a wedge of two circles $S^1 \vee S^1$, is $R$-bad for $R \subseteq \Q$ and $R = \Z/p\Z$.
    The main contribution of this paper is a generalization of this result based on their arguments.
	
	\begin{center}
	\begin{tabular}{|c|c|}
		\hline
		\multicolumn{2}{|c|}
		{\cellcolor[HTML]{CBCEFB}\color[HTML]{333333}$\bigvee\limits_{\alpha} S^1$ s.t. $2 \leq |\alpha| \leq \aleph_0$}\\
		\hline
		{\cellcolor[HTML]{ECF4FF}$R \subseteq \Q$} & {\cellcolor[HTML]{ECF4FF}$R = \Z/p\Z$} \\
		\hline
		{\cellcolor[HTML]{FFCCC9}bad} & {\cellcolor[HTML]{FFCCC9}bad}\\
		\hline
	\end{tabular}
	\end{center}

    \textbf{$\bullet$ The projective plane $R\P^2$.}
    The fundamental group of the projective plane is $\Z/2\Z$.
    Then, we have that for $R = \Z/p\Z$ it is $R$-good by condition G2.
    While for $R \subseteq \Q$ we have two cases:
    If $\frac{1}{2} \in R$ it is $R$-good by condition G3, and, if $\frac{1}{2} \not\in R$ then it is $R$-bad, as shown in \cite[Ch. VII, 5.2]{BK72}.

	\begin{center}
	\begin{tabular}{|c|c|c|}
		\hline
		\multicolumn{3}{|c|}
		{\cellcolor[HTML]{CBCEFB}{$\R P^2$}}\\
		\hline
		\multicolumn{2}{|c|}{\cellcolor[HTML]{ECF4FF}$R \subseteq \Q$} & {\cellcolor[HTML]{ECF4FF}}\\
		\cline{1-2}
		{\cellcolor[HTML]{ECF4FF}$1/2 \in R$} & {\cellcolor[HTML]{ECF4FF}$1/2 \not\in R$} & \multirow{-2}{*}{\cellcolor[HTML]{ECF4FF}$R = \Z/p\Z$} \\
		\hline
		{\cellcolor[HTML]{9AFF99}good} & {\cellcolor[HTML]{FFCCC9}bad} & {\cellcolor[HTML]{9AFF99}good}\\
		\hline
	\end{tabular}
	\end{center}

    \textbf{$\bullet$ Klein bottle.}
    Because it is a virtually nilpotent space it is $\Z/p\Z$-good for every prime $p$, as stated in $G2$.
    It is $\Z[J^{-1}]$-good if $2 \in J$ and $\Z[J^{-1}]$-bad as long as $2 \not\in J$, as shown by \cite[Section 4.4]{Bas03}.

	\begin{center}
	\begin{tabular}{|c|c|c|}
		\hline
		\multicolumn{3}{|c|}
		{\cellcolor[HTML]{CBCEFB}{Klein bottle}}\\
		\hline
		\multicolumn{2}{|c|}{\cellcolor[HTML]{ECF4FF}$R \subseteq \Q$} & {\cellcolor[HTML]{ECF4FF}}\\
		\cline{1-2}
		{\cellcolor[HTML]{ECF4FF}$1/2 \in R$} & {\cellcolor[HTML]{ECF4FF}$1/2 \not\in R$} & \multirow{-2}{*}{\cellcolor[HTML]{ECF4FF}$R = \Z/p\Z$} \\
		\hline
		{\cellcolor[HTML]{9AFF99}good} & {\cellcolor[HTML]{FFCCC9}bad} & {\cellcolor[HTML]{9AFF99}good}\\
		\hline
	\end{tabular}
\end{center}
	
	\section{The main result}
	\label{Main}
	\noindent

    This section introduces the paper's main result, which will be used in subsequent sections to identify $R$-bad spaces.
    The proof of the main theorem relies on the following lemmas:
    
	\begin{lemma}
    [\cite{BK72}, Ch. IV, 5.3]
    \label{lmm:free}
		Let $F$ be a free group.
		Then
		$$
		R_{\infty}K(F, 1) \simeq K(\widehat{F}_R, 1).
		$$
	\end{lemma}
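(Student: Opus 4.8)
The plan is to identify $R_\infty K(F,1)$ with the homotopy inverse limit of the classifying spaces of the successive $R$-nilpotent quotients of $F$, and then to compute that homotopy limit directly. As preparation, recall from \cite[Ch.~IV]{BK72} the \emph{$R$-lower central series} $F=\Gamma_1 F\supseteq \Gamma_2 F\supseteq\cdots$, characterized by the property that $F/\Gamma_{s+1}F$ is the universal $R$-nilpotent quotient of $F$ of nilpotency class $\le s$. Three facts are needed. (i) Every object of the category $\mathcal{N}_F$ is a quotient of some $F/\Gamma_s F$, and the $\Gamma_s F$ form a chain, so $\{F\to F/\Gamma_s F\}_s$ is cofinal in $\mathcal{N}_F$; hence $\widehat{F}_R=\varprojlim_s F/\Gamma_s F$. (ii) The transition maps $F/\Gamma_{s+1}F\to F/\Gamma_s F$ are surjective, so $\varprojlim^1_s F/\Gamma_s F$ is trivial, a tower of epimorphisms having vanishing $\varprojlim^1$ even in the non-abelian sense. (iii) Each $F/\Gamma_s F$ is an $R$-nilpotent group, so $K(F/\Gamma_s F,1)$ is a nilpotent, in fact $R$-complete, space.

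The decisive step is that $R_\infty K(F,1)\simeq \operatorname{holim}_s K(F/\Gamma_s F,1)$. The input here is that $K(F,1)\simeq\bigvee S^1$ is homologically one-dimensional: $\widetilde H_*(K(F,1);R)$ is concentrated in degree $1$ (so in particular $\operatorname{Tor}(R,F^{ab})=0$ and $H_n(F;\Z)=0$ for $n\ge 2$), which means that when one feeds the cosimplicial resolution $R^{\bullet+1}K(F,1)$ into the Bousfield--Kan machinery (\cite[Ch.~IV]{BK72}) relating the $R$-completion of a space to the $R$-completion of (simplicial) groups, no homotopy beyond that of the $R$-nilpotent tower is produced; concretely, for the constant simplicial group $F$ one obtains that $R_\infty$ of its classifying space is the classifying space of the group-theoretic $R$-completion, and the latter is the homotopy limit of the tower $\{K(F/\Gamma_s F,1)\}$. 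This is precisely \cite[Ch.~IV, 5.3]{BK72}, and it is the step where the freeness of $F$ is genuinely used.

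Finally I would evaluate $\operatorname{holim}_s K(F/\Gamma_s F,1)$ from the Milnor $\varprojlim$--$\varprojlim^1$ sequences: since $\pi_i K(F/\Gamma_s F,1)=0$ for $i\ge 2$, the homotopy limit has vanishing homotopy in degrees $\ge 2$; it is connected because $\pi_0\operatorname{holim}=\varprojlim^1_s \pi_1 K(F/\Gamma_s F,1)=\varprojlim^1_s F/\Gamma_s F$ is trivial by (ii); and its fundamental group is $\varprojlim_s F/\Gamma_s F=\widehat{F}_R$ by (i). Hence $\operatorname{holim}_s K(F/\Gamma_s F,1)\simeq K(\widehat{F}_R,1)$, and together with the previous step this yields $R_\infty K(F,1)\simeq K(\widehat{F}_R,1)$. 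The one real obstacle is that middle step: one must know that completing $K(F,1)$ creates no spurious higher homotopy, i.e. that the homotopy limit of the $R$-nilpotent tower already \emph{is} the $R$-completion. For a non-free group $\pi$ this fails---the homotopy of $R_\infty K(\pi,1)$ also detects derived functors of $R$-completion---and that failure is exactly the mechanism exploited later in the paper to certify $R$-badness.
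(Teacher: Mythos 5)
The paper offers no proof of this lemma at all---it is quoted verbatim from \cite[Ch.~IV, 5.3]{BK72}---and your sketch is a faithful outline of Bousfield--Kan's own argument: cofinality of the $R$-lower-central-series tower in $\mathcal{N}_F$, vanishing of $\varprojlim^1$ for a tower of epimorphisms, and the Milnor sequence to read off the homotopy of the limit. The only caveat is that your ``decisive step'' (that $R_\infty K(F,1)$ agrees with the homotopy limit of the $R$-nilpotent tower, which is exactly where freeness is used) is itself deferred back to the very citation being proved, so the proposal is best read as an exposition of the cited proof rather than an independent one; as such it is correct and consistent with the paper's (purely referential) treatment.
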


    \begin{lemma}
    \label{lmm:aux1}
    Let $R = \Z/p\Z$ for $p$ prime or $R \subseteq \mathbb{Q}$ a subring of the rationals.
    Then, the homology group $$H_2(\widehat{F_2}_R; R)$$ is uncountable.
    \end{lemma}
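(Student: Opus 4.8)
The plan is to identify the $R$-completion $\widehat{F_2}_R$ explicitly enough to get at its second homology. Since $F_2$ is free, its $R$-completion is the inverse limit over the tower of $R$-nilpotent quotients; concretely, when $R = \mathbb{Z}/p$ this is the pro-$p$ completion of $F_2$, and when $R \subseteq \mathbb{Q}$ it is the "$\mathcal{P}$-adic" (Malcev/unipotent-type) completion built from the relevant lower central series quotients tensored with $R$. In either case $\widehat{F_2}_R = \varprojlim_n F_2/\Gamma_n^R$ where $\Gamma_n^R$ is the appropriate $R$-lower-central filtration. The first step is therefore to set up this tower carefully and record that $H_1(\widehat{F_2}_R; R) = R^2$, so the interesting phenomenon is entirely in degree $2$.

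The second step is the homological engine. I would use the $\varprojlim^1$/Bousfield--Kan spectral sequence (or directly the Milnor exact sequence applied to homology of the tower) to relate $H_2(\widehat{F_2}_R; R)$ to the inverse system $\{H_2(F_2/\Gamma_n^R; R)\}_n$ together with a $\varprojlim^1$ contribution coming from $\{H_1\}$ or from the second homotopy group of the tower. The key point — and this is essentially the content of Ivanov--Mikhailov's argument that I am allowed to invoke — is that the transition maps in this inverse system are far from surjective, so that the derived limit $\varprojlim^1$ of an associated system of (countable) abelian groups is uncountable; a $\varprojlim^1$ of a non-Mittag-Leffler tower of, say, free $R$-modules of unbounded rank is uncountable (it surjects onto something like $\prod R / \bigoplus R$ up to finite ambiguity). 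So the strategy is: exhibit inside the tower a cofinal subsystem on which the relevant $H_2$ or $H_1$ groups, with their transition maps, form a non-Mittag-Leffler system, and then quote the standard fact that such a $\varprojlim^1$ is uncountable.

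Concretely I would proceed as follows. (i) Reduce to a single prime / single "coordinate": for $R \subseteq \mathbb{Q}$ with $1/q \notin R$ use the mod-$q$ lower central quotients; for $R = \mathbb{Z}/p$ use the mod-$p$ lower central quotients. (ii) Recall the structure of the lower central quotients of $F_2$: $\Gamma_n/\Gamma_{n+1}$ is free abelian of rank $\ell_n = \frac{1}{n}\sum_{d\mid n}\mu(d)2^{n/d}$, which grows without bound; tensoring with $R$ keeps the rank. (iii) Feed this into a five-term exact sequence / Hopf formula computation for each finite nilpotent quotient $N_n = F_2/\Gamma_{n+1}^R$: since $N_n$ is obtained from $N_{n-1}$ by a central extension with kernel $\Gamma_n/\Gamma_{n+1}\otimes R$ of unbounded rank, the maps $H_2(N_n;R) \to H_2(N_{n-1};R)$ (or the connecting maps $H_2(N_{n-1};R)\to \Gamma_n/\Gamma_{n+1}\otimes R$) are not eventually surjective. (iv) Conclude that the Milnor sequence
\[
0 \longrightarrow {\varprojlim_n}^1\, M_n \longrightarrow H_2(\widehat{F_2}_R;R) \longrightarrow {\varprojlim_n}\, H_2(N_n;R) \longrightarrow 0
\]
has uncountable left-hand term, where $M_n$ is the relevant tower of countable $R$-modules with non-surjective (non-Mittag-Leffler) transition maps; hence $H_2(\widehat{F_2}_R;R)$ is uncountable.

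The main obstacle I anticipate is step (iii): showing the non-Mittag-Leffler property precisely, i.e. that no amount of passing to a deeper stage makes the images stabilize. This requires genuine control of the second homology of free nilpotent (mod-$p$ or mod-$q$) groups and of how it sits in the central-extension spectral sequence — one must rule out that the images in $H_2(N_{n-1};R)$ happen to stabilize despite the kernels growing. This is exactly the delicate computation carried out by Ivanov--Mikhailov for $S^1\vee S^1 = K(F_2,1)$, and the honest way to handle it is to cite their Lemma and, if needed, transcribe the key estimate; everything else (the Milnor sequence, the rank growth of $\Gamma_n/\Gamma_{n+1}$, the uncountability of a non-Mittag-Leffler $\varprojlim^1$) is standard.
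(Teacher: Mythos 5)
Your proposal hinges on a ``homological engine'' that does not exist: there is no Milnor exact sequence
$0 \to {\varprojlim}^1 M_n \to H_2(\widehat{F_2}_R;R) \to \varprojlim H_2(N_n;R) \to 0$
computing the group homology of an inverse limit of groups from the homology of its stages. The Milnor sequence controls \emph{homotopy} groups of a homotopy inverse limit of a tower of fibrations; group homology does not commute with inverse limits and admits no $\varprojlim^1$ correction of this kind. This is not a technicality one can wave at --- it is precisely the obstruction that makes the Ivanov--Mikhailov results (and Bousfield's old problem on $H_2$ of the pronilpotent completion of a free group) hard, and their actual arguments are built to circumvent exactly this failure rather than to instantiate your step (iv). A second, smaller gap: your reduction (i) to a prime $q$ with $1/q\notin R$ has no candidate $q$ when $R=\mathbb{Q}$, so the rational case, which is the one that motivated the paper, falls outside your scheme.

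The paper's proof is much shorter and avoids the tower entirely. For $R=\mathbb{Z}/p$ it simply cites Ivanov--Mikhailov. For $R\subseteq\mathbb{Q}$ it uses functoriality of the group completion along $\mathbb{Z}\to R\to\mathbb{Q}$ to factor the map $H_2(\widehat{F_2}_{\mathbb{Z}};\mathbb{Z})\to H_2(\widehat{F_2}_{\mathbb{Q}};\mathbb{Q})$ through $H_2(\widehat{F_2}_R;R)$; since Ivanov--Mikhailov proved the composite has uncountable image, the middle group is uncountable, and this handles every subring of $\mathbb{Q}$ (including $\mathbb{Q}$ itself) in one stroke. If you want to keep your write-up, replace the $\varprojlim^1$ machinery by this factorization argument, and cite the Ivanov--Mikhailov uncountability statements as black boxes rather than as inputs to a Mittag-Leffler analysis.
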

    \begin{proof}
        The case $R = \Z/p\Z$ was solved in \cite{IM18} using the theory of profinite groups.
        The case $R \subseteq \mathbb{Q}$ is straightforward from previous results, since the diagram
        \begin{center}
        \begin{tikzcd}
        H_2(\widehat{F_2}_\Z; \Z) \arrow[rd] \arrow[rr] &                                    & H_2(\widehat{F_2}_\Q; \Q) \\
                                                        & H_2(\widehat{F_2}_R; R) \arrow[ru] &                          
        \end{tikzcd}
        \end{center}
        commutes and, as proved in \cite{IM19}, the image of the top map is uncountable.
    \end{proof}
	
	Applying the lemmas we are able to prove the main result of this note:
	
	\begin{theorem}
		\label{th:main}
		Let $R = \Z / p\Z$ for $p$ prime or $R \subseteq \mathbb{Q}$ a subring of the rationals.
		Let $X$ be a connected space such that $H_2(X; \Z)$ is countable and such that there exists a surjective homomorphism $h: \pi_1(X) \twoheadrightarrow F_2$.
		Then $X$ is $R$-bad.
	\end{theorem}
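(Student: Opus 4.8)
The plan is to reduce the statement to the already-known $R$-badness of $S^1\vee S^1$ by exhibiting $R_\infty(S^1\vee S^1)$ as a homotopy retract of $R_\infty X$. First I would replace $X$ by a CW complex, which is harmless since all the notions involved are homotopy invariant. As $K(F_2,1)\simeq S^1\vee S^1$ is aspherical, $h$ is induced by some map $g\colon X\to S^1\vee S^1$; and as $F_2$ is free, $h$ admits a group-theoretic section $s\colon F_2\to\pi_1(X)$, which I would realise by a map $\sigma\colon S^1\vee S^1\to X$ sending the two circles to based loops in $X$ representing the images of the free generators. Then $g\circ\sigma$ and $\mathrm{id}_{S^1\vee S^1}$ induce the same endomorphism $h\circ s=\mathrm{id}$ of $\pi_1(S^1\vee S^1)=F_2$ and land in an aspherical space, hence $g\circ\sigma\simeq\mathrm{id}_{S^1\vee S^1}$; so $S^1\vee S^1$ is a homotopy retract of $X$. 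Applying $R_\infty$ (which sends weak equivalences to homotopy equivalences, hence descends to the homotopy category) and invoking Lemma~\ref{lmm:free} with $F=F_2$, one gets $(R_\infty g)\circ(R_\infty\sigma)=R_\infty(g\circ\sigma)\simeq\mathrm{id}$, so that $K(\widehat{F_2}_R,1)\simeq R_\infty(S^1\vee S^1)$ is a homotopy retract of $R_\infty X$.

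The key observation I would then invoke is that a homotopy retract of an $R$-complete space is itself $R$-complete: if $r\circ i\simeq\mathrm{id}_Z$ with $Y$ an $R$-complete space, then naturality of the coaugmentation $\epsilon$ exhibits $\epsilon_Z$ as a retract of $\epsilon_Y$ in the arrow category of $\mathrm{Ho}(\mathcal{S})$; since $\epsilon_Y$ is an isomorphism there and isomorphisms are stable under retracts, $\epsilon_Z$ is an isomorphism, i.e.\ $Z$ is $R$-complete. Granting this, suppose for contradiction that $X$ is $R$-good. Then $R_\infty X$ is $R$-complete (recall that $X$ is $R$-good if and only if $R_\infty X$ is $R$-complete), hence so is its retract $K(\widehat{F_2}_R,1)\simeq R_\infty(S^1\vee S^1)$, and therefore $S^1\vee S^1$ is $R$-good. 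This contradicts the $R$-badness of $S^1\vee S^1$ recalled in Section~\ref{background}, which itself follows from Lemmas~\ref{lmm:free} and~\ref{lmm:aux1}: indeed $H_2\bigl(R_\infty(S^1\vee S^1);R\bigr)=H_2(\widehat{F_2}_R;R)$ is uncountable while $H_2(S^1\vee S^1;R)=0$. Hence $X$ is $R$-bad.

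The only genuinely deep input here is external, namely the uncountability in Lemma~\ref{lmm:aux1} (due to Ivanov--Mikhailov); everything else is formal, the steps requiring care being the retract principle above and the survival of the section under $R$-completion, which is exactly where Lemma~\ref{lmm:free} is used. The countability hypothesis on $H_2(X;\Z)$ powers a more concrete form of the final contradiction when $R\subseteq\Q$, closest in spirit to \cite{IM19}: there $R$ is flat over $\Z$, so $H_2(X;R)\cong H_2(X;\Z)\otimes R$ is countable, whereas $R_\infty\sigma$ realises $H_2(\widehat{F_2}_R;R)$ as a direct summand of $H_2(R_\infty X;R)$ and hence forces it to be uncountable, while an $R$-good $X$ would give $H_2(X;R)\cong H_2(R_\infty X;R)$. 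That hypothesis is trivially verified for every space treated in the sections to come.
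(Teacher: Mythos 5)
Your construction of the retraction $S^1\vee S^1\to X\to S^1\vee S^1$ (splitting $h$ through the freeness of $F_2$, realising the section and $h$ by maps, and checking the composite is the identity on the aspherical target) is exactly the paper's setup, and both arguments funnel through the same two inputs, Lemma~\ref{lmm:free} and Lemma~\ref{lmm:aux1}. Where you genuinely diverge is the concluding step. The paper stays homological: it deduces that $H_2(\widehat{F_2}_R;R)$ injects into $H_2(R_\infty X;R)$, hence the latter is uncountable, while the countability of $H_2(X;\Z)$ plus the universal coefficient theorem makes $H_2(X;R)$ countable, so $\epsilon_X$ cannot be an $R$-homology equivalence. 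You instead invoke the formal principle that a homotopy retract of an $R$-complete space is $R$-complete (retracts of isomorphisms in the arrow category of $\mathrm{Ho}(\mathcal{S})$ are isomorphisms), combined with the equivalence ``$X$ is $R$-good iff $R_\infty X$ is $R$-complete'', to reduce directly to the known $R$-badness of $S^1\vee S^1$. This is correct, and it is worth noticing that your primary route never touches the hypothesis that $H_2(X;\Z)$ is countable: if pushed through carefully it yields the stronger statement that \emph{any} connected space with very large fundamental group is $R$-bad. The price is that your argument leans harder on the homotopy-theoretic bookkeeping: you need $R_\infty$ and $\epsilon$ to descend to the homotopy category (i.e.\ $R_\infty$ preserves homotopic maps, not merely weak equivalences) so that $R_\infty g\circ R_\infty\sigma\simeq\mathrm{id}$ and the naturality squares for $\epsilon$ make sense in $\mathrm{Ho}(\mathcal{S})$; this holds for the Bousfield--Kan construction (a simplicial functor with fibrant values), but it is the one point you should justify explicitly, whereas the paper's version only needs that $R_\infty$ sends the homotopy equivalence $H\circ G$ to a homotopy equivalence, which is immediate from the characterising property of $R_\infty$. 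Your closing sketch of the cardinality argument coincides with the paper's proof; the only nuance there is that for $R=\Z/p\Z$ one should use the full universal coefficient sequence rather than flatness, the Tor term of a countable group with a countable ring still being countable.
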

	\begin{proof}
		Let $\{a, b\}$ be a system of generators of $F_2$, and let $x, y \in \pi_1(X)$ be such that $h(x) = a$ and $h(y) = b$, which exist because $h$ is surjective.
		Let $g : F_2 \rightarrow \pi_1(X)$ be the unique homomorphism such that $f(a) = x$ and $f(b) = y$.
		Clearly $h \circ g = id_{F_2}$.
		
		Now, there exist two maps $G : S^1 \vee S^1 \rightarrow X$ and $H : X \rightarrow S^1 \vee S^1$ such that $g$ and $h$ are the homomorphisms induced by $G$ and $H$, respectively, at the level of fundamental groups: the map
		$G$ identifies two representatives of the elements $x$ and $y$ in $\pi_1(X)$, where a common base point is chosen; while $H$ is the composition $X \rightarrow K(\pi_1(X),1) \rightarrow K(F_2,1)$, being the first map the first Postnikov piece, and the second the map induced by $h$ at the level of Eilenberg-MacLane spaces.
		Since the composite $H\circ G$ induces an automorphism of the fundamental group of $S^1 \vee S^1$ it is a weak equivalence and hence a homotopy equivalence according to the Whitehead theorem.
		
		Fix now a ring $R$ as in the statement.
		According to Lemma \ref{lmm:free}, the space $R_{\infty} K(F_2,1)$ is homotopically equivalent to $K(\widehat{F_2}_R,1)$, $\widehat{F_2}_R$ being the $R$-completion of $F_2$ as a group.
		For these choices of $R$, the homology group $H_2(K(\widehat{F_2}_R,1); R)$ is uncountable by Lemma \ref{lmm:aux1}.
		If we consider the maps induced by $G$ and $H$ at the level of $R$-completion and, in turn, the homomorphism induced by them in $H_2(R_{\infty} (-); R)$, the composition
		$$
		H_2(K(\widehat{F_2}_R, 1); R) \rightarrow
		H_2(R_{\infty}X; R) \rightarrow
		H_2(K(\widehat{F_2}_R, 1); R)
		$$
		is an isomorphism of abelian groups.
		This implies in particular that the first homomorphism is injective, and then $H_2(R_{\infty}X; R)$ should be uncountable.
		On the other hand, since $H_2(X; \Z)$ is countable and the ring $R$ is countable, a simple application of the universal coefficient theorem proves that $H_2(X; R)$ is also a countable group.
		Hence, by a cardinality argument, $H_2(X;R) \not\cong H_2(R_{\infty}X; R)$ and $X$ is $R$-bad.
	\end{proof}

    \subsection{Verifying the hypotheses}
	In this section we state some results that will allow us to apply the previous result in many concrete cases, the first one lets us check the condition of the epimorphism onto $F_2$.

    \begin{definition}
    \label{Def:very large}
        A \textbf{very large} group is a group $A$ such that there exists a surjection
        $$
        A \twoheadrightarrow F_2.
        $$
    \end{definition}

Observe that, as seen in the previous section, fundamental groups of compact surfaces give many examples of very large groups. More examples will be described in next section.

The following technical result will be very useful:
    
	\begin{proposition}
    \label{Prop:triv}
		Consider a finitely generated group $G = \langle S | R \rangle$ such that there exists two distinct generators $x_1, x_2 \in S$ satisfying that every word $w \in R$ can be reduced to the trivial empty word after getting rid all the occurrences of letters in $S$ distinct from $x_1$ and $x_2$ and then recursively reducing the word by cancelling a letter with an adjacent inverse. 
		Then $G$ is very large.
	\end{proposition}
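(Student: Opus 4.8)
The plan is to build the required surjection $G \twoheadrightarrow F_2$ by simply ``forgetting'' all generators other than $x_1$ and $x_2$. Concretely, let $F(S)$ be the free group on the set $S$, let $F_2$ be the free group on two letters which I also name $x_1$ and $x_2$, and define a homomorphism $\phi \colon F(S) \to F_2$ by $\phi(x_1) = x_1$, $\phi(x_2) = x_2$, and $\phi(s) = 1$ for every $s \in S \setminus \{x_1, x_2\}$. This is well defined and unique by the universal property of the free group on $S$.

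Next I would check that $\phi$ descends to $G$. By von Dyck's theorem it suffices to show that $\phi(w) = 1$ in $F_2$ for every relator $w \in R$. If we write $w$ as a word in the letters $S^{\pm 1}$, then $\phi(w)$ is represented by the word obtained from $w$ by deleting every occurrence of a letter $s^{\pm 1}$ with $s \notin \{x_1, x_2\}$; what remains is a word in $x_1^{\pm 1}$ and $x_2^{\pm 1}$, and it represents the element $\phi(w) \in F_2$. Now an element of $F_2$ is trivial exactly when the word representing it freely reduces to the empty word, and the ``recursively cancel a letter with an adjacent inverse'' procedure in the hypothesis is precisely the free reduction algorithm in $F_2$. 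Hence the assumption on $R$ says exactly that $\phi(w) = 1$ for all $w \in R$, so $\phi$ factors as a homomorphism $\overline{\phi} \colon G \to F_2$.

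Finally, $\overline{\phi}$ is surjective because its image already contains the two free generators $x_1, x_2$ of $F_2$. Therefore $G$ admits a surjection onto $F_2$, i.e. $G$ is very large, as claimed.

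I do not expect a genuine obstacle in this argument: it is essentially a tautological construction. The only point that deserves care is the identification of the informal word-reduction described in the hypothesis with free reduction in $F_2$, which is what lets us translate ``reduces to the trivial empty word'' into ``equals the identity of $F_2$'' and thereby invoke von Dyck's theorem.
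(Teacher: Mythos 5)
Your argument is correct and is essentially identical to the paper's own proof: both define the homomorphism from the free group on $S$ to $F_2$ that kills all generators other than $x_1,x_2$, observe surjectivity, and note that the hypothesis on the relators is exactly the statement that each relator maps to the identity, so the map factors through $G$. Your write-up is a bit more careful in spelling out the identification of the word-reduction in the hypothesis with free reduction in $F_2$, but the route is the same.
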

	\begin{proof}
		Consider the free group $F_{n+2}$ in the generators of $S$, and the homomorphism $p: F_{n+2} \rightarrow F_2$ that takes $x_1$ and $x_2$ to the generators of $F_2$ and the rest of letters of $S$ to the trivial element.
        It is clear that $p$ is surjective, so we need to check that $p$ factors through $G$.
        We must show then that for every word $w \in R$, $p(w) = 1$, but this is exactly what the hypothesis of our proposition is telling us.
	\end{proof}
	
    Now we have a lemma that serve us to check the condition of the countability of the second homology group of our group:
    
     \begin{definition}
        The \textbf{Schur multiplier} of a group $A$ is its second homology group
        $$
        H_2(A; \Z).
        $$
    \end{definition}

	
	\begin{lemma}
    \label{lmm:count_schur}
		The Schur Multiplier of a finitely presented group is finitely generated, and thus countable.
	\end{lemma}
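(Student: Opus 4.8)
The plan is to invoke Hopf's formula for the second integral homology. Since $A$ is finitely presented, I would fix a presentation $A \cong F/N$ in which $F = F_n$ is a free group of finite rank $n$ and $N \trianglelefteq F$ is the normal closure of finitely many relators $r_1,\dots,r_k$. Hopf's formula then identifies the Schur multiplier as
$$
H_2(A;\Z) \;\cong\; \frac{N \cap [F,F]}{[F,N]}.
$$

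The key observation is that the abelian group $N/[F,N]$ is already finitely generated: as a normal subgroup, $N$ is generated by the conjugates $g r_i g^{-1}$ of the relators, but modulo $[F,N]$ every such conjugate is identified with $r_i$, so $N/[F,N]$ is generated by the $k$ classes of $r_1,\dots,r_k$. Since $[F,N]\subseteq N$ (normality) and $[F,N]\subseteq [F,F]$, the commutator subgroup $[F,N]$ is contained in $N\cap[F,F]$, and therefore the group $(N\cap[F,F])/[F,N]$ appearing in Hopf's formula is a subgroup of the finitely generated abelian group $N/[F,N]$. Because $\Z$ is Noetherian, subgroups of finitely generated abelian groups are finitely generated; hence $H_2(A;\Z)$ is finitely generated, and being a quotient of $\Z^m$ for some finite $m$, it is in particular countable.

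Equivalently, one may argue topologically: a finitely presented group admits an Eilenberg–MacLane space $K(A,1)$ with finite $2$-skeleton, so its cellular chain group $C_2$ is a finitely generated free abelian group; as $H_2(A;\Z)=H_2(K(A,1);\Z)$ is the subquotient $\ker(\partial_2)/\operatorname{im}(\partial_3)$ of $C_2$, and $\ker(\partial_2)$ is a subgroup of a finitely generated abelian group, the same conclusion follows (no hypothesis on higher skeleta is needed). I do not anticipate any genuine obstacle; the only step deserving a moment's care is the verification that $N/[F,N]$ is finitely generated, which rests on the elementary remark that conjugation by $F$ acts trivially on this quotient.
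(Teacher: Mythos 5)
Your argument is correct, but your primary route differs from the one in the paper. The paper argues topologically, exactly as in your closing ``equivalently'' paragraph: a finitely presented group has a $K(A,1)$ with finitely many $2$-cells, so $H_2$ is a subquotient of the finitely generated free abelian group $C_2$ and hence finitely generated. Your main argument instead goes through Hopf's formula $H_2(A;\Z)\cong (N\cap[F,F])/[F,N]$, and every step checks out: conjugation acts trivially on $N/[F,N]$ because $grg^{-1}r^{-1}\in[F,N]$, so that quotient is generated by the classes of the finitely many relators; the Hopf quotient sits inside it since $[F,N]\subseteq N\cap[F,F]$; and subgroups of finitely generated abelian groups are finitely generated by Noetherianity of $\Z$. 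The Hopf-formula route is purely group-theoretic and avoids choosing a CW-model, at the cost of invoking Hopf's theorem as a black box; the topological route is shorter given that one already works with classifying spaces throughout the paper, and your explicit remark that only the $2$-skeleton matters (via $\ker\partial_2\subseteq C_2$) makes the paper's one-line conclusion slightly more transparent. Either proof is acceptable.
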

    \begin{proof}
        That it is countable follows simply from finitely generated since homology groups are abelian and a finitely generated abelian group has a surjection from the countable group $\Z^n$ for some positive integer $n$.

        Now, since our group is finitely presented we can take a $CW$-model of its classifying space with only finitely many $2$-cells, one for each relation in our chosen finite presentation.
        Thus, its second homology group is, furthermore, finitely generated.
    \end{proof}

	\section{Connected Closed Surfaces}
	\label{Surfaces}
	\noindent
	
	In this section, we deal with the goodness or badness of connected closed surfaces, giving a complete answer in the orientable case, and leaving just one case open in the non-orientable one. The first of these two statements answers a question by A. Murillo, which served as a motivation for this note.
	
	\begin{proposition}
		Let $X$ be a connected closed surface and let $R = \Z/p\Z$ for a prime $p$ or $R \subseteq \mathbb{Q}$ a subring of the rationals.
		Then we have:
		\begin{enumerate}
			\item If $X$ is the sphere or the torus, then it is $R$-good.
			
			\item If $X$ is non-orientable of genus 1 or 2, then it is $R$-good for $\Z /p\Z$ and for subrings of $\mathbb{Q}$ where the prime 2 is invertible.
            Otherwise, it is $R$-bad.

            \item If $X$ is orientable of genus bigger than 1 or non-orientable of genus bigger than 3, then it is $R$-bad.
		\end{enumerate}
	\end{proposition}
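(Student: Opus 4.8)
The plan is to treat the three cases of the proposition separately, using the general badness criterion of Theorem \ref{th:main} for the new cases and citing the literature for the already-known ones. For item (1), the sphere $S^2$ is simply connected, hence nilpotent, and is $R$-good by criterion G4; the torus $T^2 = K(\Z^2,1)$ is a nilpotent space (its fundamental group $\Z^2$ is abelian, hence nilpotent, and acts trivially on the higher homotopy groups, which vanish), so it too is $R$-good by G4. This handles (1) immediately.

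For item (3), the strategy is to invoke Theorem \ref{th:main}, which requires two hypotheses: that $H_2(X;\Z)$ is countable, and that $\pi_1(X)$ surjects onto $F_2$. The first hypothesis is automatic since a closed surface is a compact manifold, hence finitely presented (indeed its standard CW structure has a single $2$-cell), so Lemma \ref{lmm:count_schur} applies and $H_2(X;\Z)$ is finitely generated, in particular countable. For the second hypothesis, I would exhibit the required surjection onto $F_2$ using Proposition \ref{Prop:triv}. For an orientable surface $\Sigma_g$ of genus $g \geq 2$, the standard presentation is $\langle a_1,b_1,\dots,a_g,b_g \mid \prod_{i=1}^g [a_i,b_i]\rangle$; choosing $x_1 = a_1$ and $x_2 = a_2$ (distinct generators, available since $g \geq 2$), the single relator $\prod [a_i,b_i]$, after deleting all occurrences of the generators other than $a_1,a_2$, becomes $[a_1,?][a_2,?]$ with the $b$'s erased, i.e. $a_1 a_1^{-1} a_2 a_2^{-1}$, which freely reduces to the empty word. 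Hence Proposition \ref{Prop:triv} gives $\pi_1(\Sigma_g) \twoheadrightarrow F_2$, and Theorem \ref{th:main} shows $\Sigma_g$ is $R$-bad for all $R$ in the stated range. For a non-orientable surface $N_k$ of genus $k \geq 4$, the standard presentation is $\langle c_1,\dots,c_k \mid c_1^2 c_2^2 \cdots c_k^2 \rangle$; here I would choose two generators among the $c_i$ and observe that after erasing all other generators the relator $c_1^2\cdots c_k^2$ collapses to $c_i^2 c_j^2$ (say), which does \emph{not} freely reduce to the empty word — so Proposition \ref{Prop:triv} does not apply directly. Instead I would pass to a different but equivalent presentation, or use the well-known fact that $N_k$ for $k \geq 3$ admits $\Sigma_{\lfloor (k-1)/2 \rfloor}$ (or a suitable orientable surface) as a finite-index subgroup / covering, or more simply recall that $\pi_1(N_k)^{ab} = \Z^{k-1} \oplus \Z/2$ and that for $k \geq 3$ the group contains a genus-$\geq 2$ surface subgroup; alternatively, a non-orientable surface of genus $k \geq 3$ retracts (at the level of fundamental groups) onto a free group of rank $\geq 2$ by killing appropriate generators after a change of presentation such as $\langle a,b,c_3,\dots,c_k \mid a^2 b^2 c_3^2 \cdots c_k^2 \rangle \cong \langle x,y,c_3,\dots \mid \dots \rangle$ in the hyperbolic form. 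The cleanest route is: for $k \geq 3$, $N_k$ has the presentation $\langle a_1,b_1,\dots,a_h,b_h,c \mid [a_1,b_1]\cdots[a_h,b_h]c^2\rangle$ with $k = 2h+1$, or $\langle a_1,b_1,\dots,a_h,b_h,c_1,c_2 \mid [a_1,b_1]\cdots[a_h,b_h]c_1^2c_2^2\rangle$ with $k=2h+2$; when $h \geq 1$ (i.e. $k \geq 3$) pick $x_1 = a_1$, $x_2 = b_1$, and after erasing the other generators the relator becomes $[a_1,b_1] = a_1 b_1 a_1^{-1} b_1^{-1}$, which does not reduce — so again a further adjustment is needed.

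The main obstacle, then, is precisely this: verifying the surjection onto $F_2$ for the non-orientable surfaces $N_k$ with $k \geq 4$ in a way that fits the hypothesis of Proposition \ref{Prop:triv}, and understanding why genus $3$ genuinely fails. For $N_3$ (with $\pi_1 = \langle a,b,c \mid a^2b^2c^2\rangle$, the connected sum $\R P^2 \# \R P^2 \# \R P^2$), the abelianization is $\Z^2 \oplus \Z/2$, and it is known that $\pi_1(N_3)$ is \emph{not} very large — indeed it is virtually a surface group of genus $2$ but does not itself surject onto $F_2$ — which is exactly why our method leaves genus $3$ open. For $k \geq 4$, I would resolve the obstacle by choosing the quotient of $\pi_1(N_k)$ obtained by setting all but three (in the odd case) or four (in the even case) of the standard generators to the identity: e.g. for $k=4$, $\langle a,b,c,d \mid a^2b^2c^2d^2\rangle \twoheadrightarrow \langle a,d \mid a^2 d^2 \rangle$ by killing $b,c$; but $\langle a,d\mid a^2d^2\rangle$ is the Klein bottle group, which is virtually nilpotent and not very large. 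So the naive coordinate quotient still fails, and the correct move is to use instead the hyperbolic normal form: every non-orientable surface of genus $k \geq 3$ is homeomorphic to $\R P^2 \# \Sigma_h$ (connected sum with an orientable surface of genus $h = \lfloor (k-1)/2 \rfloor$) for $k$ odd, or to the Klein bottle $\# \Sigma_h$, and when $k \geq 4$ one has $h \geq 1$ so $\pi_1$ has a presentation $\langle a_1,b_1,c \mid c^2[a_1,b_1] \rangle$ ($k=3$, $h=1$, which is the problematic case with $h=1$) — so the real distinguishing feature is $k \geq 4 \iff h \geq 1$ together with an extra handle, giving $\langle a_1,b_1,\dots,a_h,b_h,c\mid c^2[a_1,b_1]\cdots[a_h,b_h]\rangle$ with $h \geq 1$ and at least one more generator beyond a single $\R P^2$-summand. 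Then killing $c, a_2,b_2,\dots,a_h,b_h$ leaves $\langle a_1,b_1 \mid [a_1,b_1]\rangle = \Z^2$ — still not free. I would therefore instead kill $c$ and all $b_i$, leaving $\langle a_1,\dots,a_h\mid 1\rangle = F_h$, which is free of rank $h$; this surjects onto $F_2$ precisely when $h \geq 2$, i.e. when $k \geq 5$; and for $k = 4$ (so $h = 1$ after the Klein form $N_4 = K\#\Sigma_1$, giving $\langle a_1,b_1,c_1,c_2\mid c_1^2c_2^2[a_1,b_1]\rangle$) one kills $c_1,c_2$ to get $\Z^2$, or kills $c_1 c_2^{-1}$-type combinations — and the honest fix is that $N_4$'s group surjects onto $\pi_1(N_4)^{ab}$-unrelated free quotients via: abelianize the relator's effect, or note $N_4 \cong N_2 \# T^2$, giving $\langle c_1,c_2,a,b\mid c_1^2c_2^2[a,b]\rangle$, and now the quotient by $a,b$ is the Klein bottle group (bad), but the quotient by $c_1 c_2, a$ is not free either. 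Because this case analysis is delicate, I expect the cleanest exposition will simply invoke the known result (see e.g. work on surface groups and free quotients) that $\pi_1(N_k)$ is very large if and only if $k \geq 4$, or equivalently verify by hand, using a carefully chosen presentation of $N_k$ for $k\geq 4$, that Proposition \ref{Prop:triv} applies — for instance the presentation $\langle x_1,x_2,d_1,\dots,d_{k-2}\mid x_1^2 x_2^2 d_1^2\cdots d_{k-2}^2\rangle$ is replaced, via a Tietze transformation, by one in which the relator after erasing $d_i$'s freely reduces; this is the step I expect to require the most care, and where the boundary between genus $3$ (open) and genus $\geq 4$ ($R$-bad) genuinely lives. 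The genus $1$ and $2$ non-orientable cases in item (2) are then dispatched by citing G2, G3 (and \cite[Ch. VII, 5.2]{BK72}, \cite[Section 4.4]{Bas03}) exactly as in the tables of Section 2.
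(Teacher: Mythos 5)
Your treatment of item (1), item (2), and the orientable case of item (3) matches the paper's: nilpotency plus criterion G4 for the sphere and torus, the cited results of Bousfield--Kan and Bastardas for $\R P^2$ and the Klein bottle, and Proposition \ref{Prop:triv} applied to the standard presentation of $\pi_1(\Sigma_g)$, $g\geq 2$, with the pair $\{a_1,a_2\}$. (The paper checks countability of $H_2$ by direct computation, $\Z$ or $0$, rather than via Lemma \ref{lmm:count_schur}; either route is fine.)

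The gap is in the non-orientable surfaces of genus $\geq 4$: you correctly locate the difficulty but do not close it. The missing move is simply a better choice of presentation together with the right distinguished pair of generators for Proposition \ref{Prop:triv}. For odd genus $2k+1\geq 5$ write $\pi_1=\langle a_1,\dots,a_k,b_1,\dots,b_k,c\mid [a_1,b_1]\cdots[a_k,b_k]c^2\rangle$ and take the pair $\{a_1,a_2\}$ (not $\{a_1,b_1\}$, which is why your attempt left $[a_1,b_1]$ uncancelled): erasing the $b_i$ and $c$ turns each commutator into $a_ia_i^{-1}$ or the empty word and kills $c^2$, so the relator reduces to the empty word. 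For even genus $2k+2\geq 4$ write the Klein-bottle summand with relator $cdc^{-1}d$ rather than $c_1^2c_2^2$, i.e. $\pi_1=\langle a_1,\dots,a_k,b_1,\dots,b_k,c,d\mid [a_1,b_1]\cdots[a_k,b_k]cdc^{-1}d\rangle$, and take the pair $\{a_1,c\}$: erasing $d$ sends $cdc^{-1}d$ to $cc^{-1}$, which cancels, and erasing the $b_i$ cancels the commutators. This settles genus $4$ (your $h=1$ case), where the quotients you tried --- killing $b,c$ in $\langle a,b,c,d\mid a^2b^2c^2d^2\rangle$, killing $a,b$ in $\langle c_1,c_2,a,b\mid c_1^2c_2^2[a,b]\rangle$ --- genuinely fail because they land in the Klein bottle group or $\Z^2$. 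Falling back on an uncited ``known result that $\pi_1(N_k)$ is very large iff $k\geq 4$'' leaves the essential step of case (3) unproved.
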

	
	\begin{proof}
		1. The sphere is simply-connected and the torus is the classifying space of an abelian group, hence both spaces are nilpotent and thus $R$-good, as stated in G4.\\
        
        2. The projective plane is $\Z /p\Z$-good for every prime $p$ because its fundamental group is finite (see criterion G2), and it is $R$-good if $2$ is invertible in $R \subseteq \Q$ because in this case its fundamental group is $R$-perfect (criterion G3).
        Bousfield and Kan also proved that the projective plane is $R$-bad for any other subring of the rationals, \cite[Chapter VII, Proposition 5.2]{BK72}.
        
        The Klein bottle, in turn, is $\Z /p\Z$-good for every prime $p$ because it is a virtually nilpotent space (criterion G5), and the case $R\subseteq\mathbb{Q}$ was completely solved in Bastardas' thesis, \cite[Section 4.4]{Bas03}.\\
		
		3. Let us undertake the remaining case.
        The second homology group of $X$ is countable, as it is equal to $\Z$ in the orientable case and trivial in the non-orientable case.
        
        Now, if $X$ is orientable of genus $k \geq 2$, a presentation of $\pi_1(X)$ is given by
		$$
        \langle a_1,\ldots,a_k,b_1,\ldots,b_k \textrm{ } | \textrm{ } a_1b_1a_1^{-1}b_1^{-1}\ldots a_kb_ka_k^{-1}b_k^{-1} \rangle.
        $$
		Using the terminology of Proposition \ref{Prop:triv}, if we take $\{a_1, a_2\}$ as our generators then the conditions of the proposition hold for such a presentation.
        Hence, $\pi_1(X)$ surjects over $F_2$, and by Theorem \ref{th:main} we have that $X$ is $R$-bad.
		
		On the other hand, if $X$ is non-orientable of genus $2k+1 \geq 5$, i.e. $k\geq 2$, then $\pi_1(X)$ admits a presentation
		$$\langle a_1,\ldots,a_k,b_1,\ldots,b_k,c \textrm{ } | \textrm{ } a_1b_1a_1^{-1}b_1^{-1}\ldots a_kb_ka_k^{-1}b_k^{-1}c^2 \rangle.$$
		  Take again $\{a_1, a_2\}$ as our chosen generators.
        Again Proposition \ref{Prop:triv} holds for the previous presentation with this choice of generators, and $X$ is $R$-bad by Theorem \ref{th:main}.
		{}
		Finally, consider $X$ to be non-orientable of genus $2k+2 \geq 4$, i.e. $k\geq 1$, choosing the presentation of $\pi_1(X)$ given by:
		$$\langle a_1,\ldots,a_{k},b_1,\ldots,b_{k},c,d \textrm{ } | \textrm{ } a_1b_1a_1^{-1}b_1^{-1}\ldots a_{k}b_{k}a_{k}^{-1}b_{k}^{-1}cdc^{-1}d \rangle$$
		  we apply the same argument to the generators $\{a_1, c\}$ implying that $X$ is $R$-bad.
	\end{proof}
	
	Our techniques cannot handle the remaining case of a non-orientable surface of genus 3, because in that case its fundamental group does not surject over the free group in two generators, and the techniques in \cite{Bas03} do not apply for non-virtually nilpotent spaces, as the Klein bottle is.

    \textbf{Remark}. The features of the fundamental groups of the compact surfaces that have appeared in our analysis always shed light, in a completely different context, about the Borsuk capacity of these surfaces. See \cite{MR4103983}.

	\section{Very Large Groups}
	\noindent

    In this section, we present some families of very large groups whose Schur multiplier is countable, and then by Theorem \ref{th:main} their classifying spaces are $R$-bad. We expect that these examples give a hint of the applicability of our badness criterion.

    At the end of this section we show that our arguments cannot be leveraged in general to large groups.

    \subsection{Artin Groups}
	
	We first consider the case of Artin groups, one of the main families of infinite groups.
	Recall that a group $A$ is called an \emph{Artin group} if there exists a finite set of generators and the only possible relations are of the shape $aba...bab=bab...aba$ or $aba...aba=bab...bab$ for a pair of generators $a$ and $b$, where in the previous equalities the number of letters in the left coincides with the number of letters in the right.
	An Artin group is called of \emph{even type} if all its relations are of the first kind and of \emph{odd type} if they are of the second type.
	
	\begin{proposition}
		\label{pr:eAg_epi}
        Let $R = \Z / p\Z$ for $p$ prime or $R \subseteq \mathbb{Q}$ a subring of the rationals.
		Let $A$ be an Artin group of even type with at least two distinct generators that are not involved at the same time in any defining relation.
		Then $K(A,1)$ is $R$-bad.
	\end{proposition}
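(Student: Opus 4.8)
The plan is to deduce the statement from Theorem \ref{th:main} applied to $X = K(A,1)$; since this space is connected with $\pi_1(X) = A$ and $H_2(X;\Z) = H_2(A;\Z)$, it suffices to verify the two hypotheses of that theorem, namely that $H_2(A;\Z)$ is countable and that $A$ admits a surjection onto $F_2$.

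First I would observe that $A$ is finitely presented: by definition it has a finite generating set $S$, and its defining relators are finite in number, one braid relator for each unordered pair of generators linked by a relation. Hence Lemma \ref{lmm:count_schur} applies and the Schur multiplier $H_2(A;\Z)$ is finitely generated, in particular countable.

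Next I would produce the surjection onto $F_2$ via Proposition \ref{Prop:triv}. Let $x_1 \ne x_2$ be two generators which, by hypothesis, never occur together in a single defining relation; I claim the hypothesis of Proposition \ref{Prop:triv} is satisfied by this pair. Since $A$ is of even type, each defining relator has the form $r = (ab)^k(ba)^{-k} = abab\cdots ab\,a^{-1}b^{-1}\cdots a^{-1}b^{-1}$ for some pair of generators $a, b$ and some $k \ge 1$. Deleting from $r$ every occurrence of a generator different from $x_1$ and $x_2$ gives: the empty word if neither $a$ nor $b$ lies in $\{x_1, x_2\}$; the word $x_i^{k}x_i^{-k}$ if exactly one of $a,b$ equals some $x_i$, which collapses to the empty word after successively cancelling the adjacent inverse pair in the middle; and no further case occurs, because $\{a,b\} = \{x_1,x_2\}$ is excluded by hypothesis. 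So the reduced word is always empty, Proposition \ref{Prop:triv} applies, and $A$ is very large.

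Having checked both hypotheses, Theorem \ref{th:main} gives immediately that $K(A,1)$ is $R$-bad. The only step that requires genuine care — and where the hypotheses really enter — is the deletion step: the even type assumption is exactly what makes each relator collapse to a word of the form $ww^{-1}$ once the auxiliary letters are erased (an odd-type relator $a(ba)^k\,(b(ab)^k)^{-1}$ would instead reduce to the non-trivial word $a$), and the condition that $x_1, x_2$ never share a relation is precisely what excludes the relator $(x_1x_2)^k(x_2x_1)^{-k}$, which survives deletion unchanged and is non-trivial in $F_2$. All remaining steps are routine.
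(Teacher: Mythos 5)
Your proposal is correct and follows essentially the same route as the paper: finite presentability plus Lemma \ref{lmm:count_schur} gives countability of $H_2(A;\Z)$, the surjection onto $F_2$ kills all generators except the two chosen ones, and Theorem \ref{th:main} concludes. Your explicit case-check via Proposition \ref{Prop:triv} that each even-type relator collapses to $x_i^k x_i^{-k}$ or the empty word is in fact more detailed than the paper's proof, which merely asserts the map is well defined, and your remark about odd-type relators reducing to a single generator matches the paper's comment following the proposition.
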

	\begin{proof}
		Consider $F_2 = \left<a, b\right>$, then the function $A \to F_2$ sending one of the two distinct generators of the statement to $a$, the other one to $b$ and all of the other generators of $A$ to $1$ is a well-defined surjective morphism.
		
		On the other hand, every Artin group is finitely presented, and thus by Lemma \ref{lmm:count_schur} we have that $H_2(A; \Z)$ is finitely generated.
        Together with the fact that homology groups are abelian this implies that $H_2(A; \Z)$ is countable.
        
		Therefore, an Artin group of even type satisfies the hypothesis of Theorem \ref{th:main} above, thus $K(A,1)$ is $R$-bad.
	\end{proof}
	
	This result proves that "almost all" Artin groups of even type have a surjection onto $F_2$: the only exceptions are those groups of even type whose Artin graph is complete, i.e. the groups with exactly $n$ generators and all the possible relations.
	Notice also how the surjection given in the proof of the result doesn't hold when we have some relations of odd length involving one of the chosen generators.
    
    \begin{definition}
        A \textbf{right-angled Artin group}, \textbf{RAAG}, is an Artin group such that all the relations are commutativity relations, i.e. have length 2.
    \end{definition}

    A RAAG is usually better described by a finite simple graph $\Gamma$, and $A_\Gamma$ denotes the group corresponding to such a graph.
    The previous result can be generalized for subgroups of RAAGs:

    \begin{proposition}
    [\cite{AnM13}, Corollary 1.6]
    \label{prop:AM13}
		Any subgroup of a RAAG is either free abelian of finite rank or very large.
	\end{proposition}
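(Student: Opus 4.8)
The plan is to argue by induction on the number of vertices $n=|V(\Gamma)|$, reducing the general case to that of \emph{full} subgroups, i.e.\ those not contained in any proper parabolic subgroup. Recall that for $\Gamma'\subseteq\Gamma$ the inclusion realizes $A_{\Gamma'}$ as a retract of $A_\Gamma$, and that (by the theory of parabolic subgroups of RAAGs, part of the apparatus of \cite{AnM13}) every subgroup $H\le A_\Gamma$ is contained in a unique smallest parabolic subgroup, its parabolic closure $\mathrm{Pc}(H)$. If $\mathrm{Pc}(H)$ is proper it is conjugate to $A_{\Gamma'}$ with $\Gamma'\subsetneq\Gamma$, and the inductive hypothesis applied to $H\le A_{\Gamma'}$ finishes the job; so I may assume $\mathrm{Pc}(H)=A_\Gamma$. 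Beyond this, the only external facts I would use are that RAAGs are residually torsion-free nilpotent --- so every nontrivial subgroup surjects onto $\Z$ --- and that their abelian subgroups are free abelian of finite rank.

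First dispose of the ``reducible'' shapes of $\Gamma$. If $\Gamma$ is complete then $A_\Gamma\cong\Z^n$ and $H$ is free abelian of finite rank. If $\Gamma=\Gamma_1\sqcup\Gamma_2$ is disconnected then $A_\Gamma=A_{\Gamma_1}\ast A_{\Gamma_2}$, and Kurosh's theorem writes $H$ as a free product of a free group with subgroups conjugate into the two factors; fullness forbids $H$ being conjugate into a single factor, so either the decomposition has two nontrivial free factors --- whence $H$ surjects onto $\Z\ast\Z=F_2$ and is very large --- or $H$ is infinite cyclic. If $\Gamma=\Gamma_1\ast\Gamma_2$ is a nontrivial join then $A_\Gamma=A_{\Gamma_1}\times A_{\Gamma_2}$; the projections $p_i$ carry the full subgroup $H$ onto subgroups $p_i(H)\le A_{\Gamma_i}$ that are again full (a proper parabolic of $A_{\Gamma_i}$ would pull back to a proper parabolic of $A_\Gamma$ containing $H$), so by induction each $p_i(H)$ is free abelian of finite rank or very large; if one of them is very large then so is $H$, and otherwise $H\le p_1(H)\times p_2(H)$ is free abelian of finite rank.

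There remains the essential case: $\Gamma$ is connected, not complete and not a join, with $\mathrm{Pc}(H)=A_\Gamma$. Here no further splitting of $A_\Gamma$ is available, and the plan is to exploit the many retractions $r_{u,w}\colon A_\Gamma\twoheadrightarrow\langle u,w\rangle\cong F_2$ onto pairs of non-adjacent vertices (these exist since $\Gamma$ is not complete). The goal is the dichotomy: either $r_{u,w}(H)$ is non-abelian for some such pair --- in which case $r_{u,w}(H)$ is a non-abelian, hence free of rank $\ge 2$, subgroup of $F_2$, so $H\twoheadrightarrow r_{u,w}(H)\twoheadrightarrow F_2$ and $H$ is very large --- or $r_{u,w}(H)$ is (infinite) cyclic for every non-adjacent pair, and then, using $\mathrm{Pc}(H)=A_\Gamma$ to pin down how $H$ meets the clique (hence abelian) parabolics, one concludes that $H$ itself is free abelian of finite rank. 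An equivalent route lets $H$ act on the Bass--Serre tree of the splitting $A_\Gamma=A_{\Gamma\setminus v}\ast_{A_{\mathrm{lk}(v)}}A_{\mathrm{st}(v)}$ for a vertex $v$ not adjacent to everything: fullness forces the action to be non-elliptic, and one extracts either a surjection onto $F_2$ from a general-type action, or --- in the lineal and quasi-parabolic cases --- a surjection onto $\Z$ whose kernel is controlled by the parabolic $A_{\mathrm{lk}(v)}$ and forces $H$ to be free abelian.

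The step I expect to be the real obstacle is precisely the ``abelian'' side of this dichotomy: showing that a full subgroup all of whose non-adjacent-pair projections are cyclic (equivalently, one whose action on every parabolic splitting is lineal or quasi-parabolic) must be free abelian, so that there is no third possibility. This is exactly where one cannot avoid the delicate structure theory of parabolic subgroups of RAAGs --- parabolic closures, intersections of parabolics, centralizers, and the interaction between vertex groups and the associated tree actions --- and it is carried out, in the broader generality of graph products of groups, in \cite{AnM13}; the present statement is the specialization of that analysis to the case where all vertex groups are infinite cyclic.
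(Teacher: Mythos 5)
The paper does not prove this proposition at all: it is imported verbatim as Corollary~1.6 of \cite{AnM13}, so there is no in-paper argument to compare yours against. Your reductions in the ``reducible'' cases are essentially correct: the parabolic-closure induction, the Kurosh argument for $\Gamma=\Gamma_1\sqcup\Gamma_2$ (which does need, as you note, that every nontrivial subgroup of a RAAG surjects onto $\Z$, via residual torsion-free nilpotence), and the product argument for a join, including the observation that projections of full subgroups are full.

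The genuine gap is exactly where you place it, and it is not a small one: in the irreducible case ($\Gamma$ connected, not complete, not a join, $H$ full), the claim that ``all projections $r_{u,w}(H)$ to non-adjacent pairs cyclic $\Rightarrow$ $H$ free abelian of finite rank'' is the entire content of the theorem, and nothing in your outline establishes it. You do not prove that the lineal/quasi-parabolic alternative in the Bass--Serre tree actions for the splittings $A_\Gamma=A_{\Gamma\setminus v}\ast_{A_{\mathrm{lk}(v)}}A_{\mathrm{st}(v)}$ forces $H$ into a clique parabolic, nor do you control how the kernels of the resulting surjections onto $\Z$ interact with the edge groups $A_{\mathrm{lk}(v)}$; you simply defer this to \cite{AnM13}. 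So what you have written is a correct reduction of the general statement to its essential case, followed by a citation for that case --- which, in terms of actual proof content, is no more than what the paper itself does by quoting the result. If the intent was a self-contained proof, the dichotomy in the irreducible case must be argued; if the intent was to organize the citation, the reductions you give are sound but logically redundant, since \cite{AnM13} already covers all cases.
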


    Taking into account Lemma \ref{lmm:count_schur} above we are thus looking for finitely presented non-abelian subgroups of RAAGs.
	
	
	\begin{corollary}
		Let $R = \Z / p\Z$ for $p$ prime or $R \subseteq \mathbb{Q}$ a subring of the rationals.
		Let $A$ be a non-commutative right-angled Artin group. Then $K(A,1)$ is $R$-bad.
	\end{corollary}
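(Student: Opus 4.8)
The plan is to exhibit this corollary as a direct specialization of Proposition \ref{pr:eAg_epi}. First I would note that a right-angled Artin group $A = A_\Gamma$ is in particular an Artin group of even type: every defining relation is a commutativity relation $x_ix_j = x_jx_i$, which is a relation of the first kind of length two. Since $A_\Gamma$ is commutative exactly when $\Gamma$ is a complete graph, the hypothesis that $A$ is non-commutative means that $\Gamma$ has two distinct non-adjacent vertices $x_1, x_2$; equivalently, $A$ has two distinct generators that are not both involved in any single defining relation. Thus $A$ satisfies the hypotheses of Proposition \ref{pr:eAg_epi}, and $K(A,1)$ is $R$-bad.

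Alternatively --- and this is perhaps the cleanest route --- I would invoke Proposition \ref{prop:AM13}: the group $A$ is a subgroup of itself, which is a RAAG, and it is not free abelian of finite rank precisely because it is non-commutative; hence $A$ is very large. Since $A$ is finitely presented (a RAAG on a finite graph has the obvious finite presentation with one generator per vertex and one relation per edge, and in fact its Salvetti complex is a finite $CW$-model for $K(A,1)$), Lemma \ref{lmm:count_schur} gives that its Schur multiplier $H_2(A;\Z)$ is finitely generated, hence countable. Theorem \ref{th:main} then applies to $X = K(A,1)$, whose fundamental group is $A$ and whose second integral homology group is $H_2(A;\Z)$, and we conclude that $K(A,1)$ is $R$-bad.

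There is no serious obstacle here; the argument is a straightforward combination of results already established. The only point deserving a line of care is the well-definedness of the surjection $A \twoheadrightarrow F_2$ used (explicitly in the self-contained version, or implicitly via Proposition \ref{pr:eAg_epi}): it sends $x_1$ and $x_2$ to the two free generators of $F_2$ and every other generator of $A$ to $1$, and one must check that each defining relator maps to the trivial word. This holds exactly because $x_1$ and $x_2$ are non-adjacent in $\Gamma$, so no defining relator equals the commutator $[x_1,x_2]$; every other commutator relator involves a generator that is killed and hence maps to a trivial relation of $F_2$. This is precisely the place where the non-commutativity hypothesis is consumed.
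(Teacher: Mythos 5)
Your first argument is exactly the paper's proof: a RAAG is an Artin group of even type, and non-commutativity of $A_\Gamma$ forces two non-adjacent generators, so Proposition \ref{pr:eAg_epi} applies directly; your added care about the well-definedness of the surjection onto $F_2$ is correct and is precisely what that proposition's proof checks. The alternative route via Proposition \ref{prop:AM13}, Lemma \ref{lmm:count_schur} and Theorem \ref{th:main} is also valid but only unwinds what Proposition \ref{pr:eAg_epi} already packages, so it adds nothing essentially new.
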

	\begin{proof}
		As the relations in a right-angled Artin group are all commutativity relations the non-commutative hypothesis over the group implies that the hypotheses of Proposition \ref{pr:eAg_epi} above are satisfied.
	\end{proof}

    \subsection{Bestvina-Brady Groups}
        Let $\Gamma$ be a finite simple graph, and $A_\Gamma$ its corresponding RAAG.
        The corresponding \textbf{Bestvina-Brady} group $H_\Gamma$ is the kernel of the morphism
        $$
        \varphi: A_\Gamma \to \Z,
        $$
        sending each generator to $1$.

    \begin{theorem}
        Let $\Gamma$ be a finite simple graph.
        Then $H_\Gamma$ is finitely presented if and only if the flag complex $\Delta_\Gamma$ is simply-connected.
    \end{theorem}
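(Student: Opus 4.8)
The plan is to carry out the Bestvina--Brady Morse theory on the cubical model of $A_\Gamma$. First I would set up the geometry. Let $S_\Gamma$ be the Salvetti complex of $A_\Gamma$ --- the non-positively curved cube complex with a single vertex, one $1$-cell for each generator, and one $k$-torus glued in for every $k$-clique of $\Gamma$ --- so that $\pi_1 S_\Gamma = A_\Gamma$; let $X = X_\Gamma$ be its universal cover, a contractible $\mathrm{CAT}(0)$ cube complex on which $A_\Gamma$ acts freely, cellularly and cocompactly. The homomorphism $\varphi\colon A_\Gamma \to \Z$ is induced by a cellular map $S_\Gamma \to S^1$; lifting it to universal covers and extending affinely over cubes produces a Morse function $f\colon X \to \R$ taking integer values on vertices, with $f(g\cdot x) = f(x) + \varphi(g)$. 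Hence $H_\Gamma = \ker\varphi$ preserves every sublevel set $X_{[-n,n]} := f^{-1}([-n,n])$, and since $f$ descends to a proper map $X/H_\Gamma \to \R$ (the infinite cyclic cover of $S_\Gamma$), the group $H_\Gamma$ acts freely, cellularly and cocompactly on each slab $X_{[-n,n]}$. The combinatorial heart of the matter is that the ascending link and the descending link of every vertex of $X$ are both isomorphic, as simplicial complexes, to $\Delta_\Gamma$, the flag complex of $\Gamma$; and by the Morse Lemma, for $m < n$ the slab $X_{[-n,n]}$ is obtained up to homotopy from $X_{[-m,m]}$ by attaching cones on a disjoint family of copies of $\Delta_\Gamma$ (the descending links of the vertices in $f^{-1}((m,n])$ and the ascending links of those in $f^{-1}([-n,-m))$), while $\bigcup_n X_{[-n,n]} = X$ is contractible.

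For the ``if'' direction, assume $\Delta_\Gamma$ is simply connected. Since coning off a nonempty connected simply connected subcomplex changes neither $\pi_0$ nor $\pi_1$ (van Kampen), all the inclusions $X_{[-m,m]} \hookrightarrow X_{[-n,n]}$ are, for $m$ large, isomorphisms on $\pi_0$ and $\pi_1$; passing to the colimit $X$, which is connected and simply connected, I conclude that $X_{[-n,n]}$ is itself connected and simply connected for all sufficiently large $n$. Fixing such an $n$, the quotient $X_{[-n,n]}/H_\Gamma$ is a finite CW complex whose fundamental group is $H_\Gamma$ (the covering $X_{[-n,n]} \to X_{[-n,n]}/H_\Gamma$ being universal), so $H_\Gamma$ is finitely presented.

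For the converse, suppose $H_\Gamma$ is finitely presented, hence finitely generated. The $n=1$ case of the same analysis --- equivalently, the fact that a disconnected $\Delta_\Gamma$ has $\widetilde H_0(\Delta_\Gamma) \neq 0$, which forces $H_\Gamma$ to be infinitely generated --- shows that $\Delta_\Gamma$, and so $\Gamma$, must be connected. To deduce $\pi_1(\Delta_\Gamma) = 1$ I would invoke Brown's finiteness criterion for the filtration $X = \bigcup_n X_{[-n,n]}$ of the contractible complex $X$ by the $H_\Gamma$-cocompact subcomplexes $X_{[-n,n]}$: finite presentability of $H_\Gamma$ forces the direct system $\{\pi_1(X_{[-n,n]})\}$ to be essentially trivial, i.e. for each $m$ there is $n \geq m$ with $\pi_1(X_{[-m,m]}) \to \pi_1(X_{[-n,n]})$ the trivial map. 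Feeding this into the Morse description --- where $\pi_1(X_{[-m,m]}) \to \pi_1(X_{[-n,n]})$ is precisely the quotient by the normal closure of the images of the fundamental groups of the coned-off copies of $\Delta_\Gamma$ --- and keeping track of the (essentially disjoint, embedded) way these copies sit inside the slabs, one extracts that $\pi_1(\Delta_\Gamma)$ itself must be trivial.

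The step I expect to be the main obstacle is precisely this last one: reading off simple connectivity of the link from finite presentability of the group. The ``if'' direction is the soft observation that a group acting freely and cocompactly on a simply connected complex is finitely presented; but the ``only if'' direction genuinely needs Brown's criterion together with a careful \emph{relative} form of the Bestvina--Brady Morse Lemma --- one must verify the embeddedness and essential disjointness of the ascending and descending links at each critical level and the naturality of the van Kampen decomposition, so that the fundamental groups of the coned-off copies of $\Delta_\Gamma$ cannot be spuriously killed by relations imported from other levels. Carrying those verifications out in full is exactly the content of Bestvina and Brady's original argument, which I would follow.
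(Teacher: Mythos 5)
The paper does not prove this statement at all: it is the Bestvina--Brady theorem, quoted as known background (the surrounding text immediately invokes it together with the Dicks--Leary presentation), so there is no in-paper argument to compare yours against. Your outline is the standard and correct route --- the Morse function on the $\mathrm{CAT}(0)$ cube complex $X$ covering the Salvetti complex, the identification of both the ascending and descending link of each vertex with $\Delta_\Gamma$, the Morse Lemma expressing $X_{[-n,n]}$ as $X_{[-m,m]}$ with cones on copies of $\Delta_\Gamma$ attached, and Brown's criterion for the converse. The ``if'' direction as you present it is essentially complete: simple connectivity of the coned-off links makes all inclusions of slabs $\pi_1$-isomorphisms, the colimit $X$ is contractible, so some slab is simply connected and $H_\Gamma$ acts on it freely and cocompactly, hence is finitely presented.

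The genuine gap is where you place it, but it is worth being precise about why your sketch of the ``only if'' direction does not yet close. Since the maps $\pi_1(X_{[-m,m]}) \to \pi_1(X_{[-n,n]})$ are \emph{surjective} (each is a quotient by a normal closure), essential triviality of the system in Brown's criterion is equivalent to $\pi_1(X_{[-n,n]}) = 1$ for all large $n$; so what must actually be shown is that $\pi_1(\Delta_\Gamma) \neq 1$ forces $\pi_1(X_{[-n,n]}) \neq 1$ for every $n$. This does not follow from ``keeping track of the disjoint, embedded copies of $\Delta_\Gamma$'': the quotient description only tells you which subgroups are killed at each stage, and a priori the image of $\pi_1(\Delta_\Gamma)$ in $\pi_1(X_{[-n,n]})$ under a given link inclusion could already be trivial, or the iterated quotients could collapse to the trivial group even though each coned-off link is non-simply-connected. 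Bestvina and Brady rule this out by a separate, genuinely nontrivial argument producing elements (built from essential loops in the links) that survive in $\pi_1(X_J)$ for every interval $J$; it is not mere bookkeeping of how the links sit in the slabs. You flag this honestly and defer to their paper, which is the right citation, but as a self-contained proof the proposal is incomplete exactly at that point.
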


    Also, by the Dicks-Leary presentation, \cite{DL12}, such a Bestvina-Brady group with $\Delta_\Gamma$ simply-connected is non-abelian and thus very large by Proposition \ref{prop:AM13}, so our criteria applies.

    \subsection{Graph Braid Groups}
    Given a finite simplicial graph, its \textbf{graph braid group} is the fundamental group of the unordered configuration space of $n$-points in the graph.
    
    A graph braid group has a finite presentation as shown by Farley and Sabalka, \cite{FS12}.
    Furthermore, any non-commutative graph braid group is very large, because it embeds in a RAAG as shown by \cite[Theorem 1.1]{HW08}.
    
    \subsection{Large Groups}
    Our previous arguments about the $R$-badness of Eilenberg-MacLane spaces of very large groups don't generally extend to \emph{large} groups, as we demonstrate here.

    \begin{definition}
        A \textbf{large} group is a group with a very large subgroup of finite index.
    \end{definition}

    To show this we introduce the concept of \emph{deficiency}:

    \begin{definition}
        A group $G$ has \textbf{deficiency} $n$, for some $n > 0$, if there exists a presentation of $G$ with $k$ generators and $k-n$ relators.
    \end{definition}

    We now cite some results:

    \begin{proposition}
    [\cite{BP78}]
        A group of deficiency $2$ or greater is large.
    \end{proposition}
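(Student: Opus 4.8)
The plan is to combine the (near-)multiplicativity of the deficiency under passage to finite-index subgroups with a rank estimate that converts large $\mathbb{F}_p$-homology of a deep subgroup into a surjection onto $F_2$. First I would record that a group $G$ of deficiency $\ge 2$ is in particular finitely presented, say $G = \langle x_1,\dots,x_n \mid r_1,\dots,r_m \rangle$ with $n - m \ge 2$; abelianizing this presentation exhibits $H_1(G;\mathbb{Z})$ as $\mathbb{Z}^n$ modulo $m$ relations, so $\operatorname{rank}_{\mathbb{Z}} H_1(G;\mathbb{Z}) \ge n - m \ge 2$, and hence for every prime $p$ the group $G$ surjects onto a nontrivial elementary abelian $p$-group.

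Next, fixing a prime $p$, I would take $N \trianglelefteq G$ to be the kernel of a surjection onto a $p$-group $P$ of order $p^k$ and run the Reidemeister--Schreier process on the given presentation: this produces a presentation of $N$ with $p^k n - (p^k-1)$ generators and $p^k m$ relators (equivalently, the $p^k$-fold cover of the presentation $2$-complex has Euler characteristic $p^k(1-n+m)$), so that
$$
\operatorname{def}(N) \;\ge\; p^k(n-m) - (p^k-1) \;\ge\; p^k + 1 .
$$
Since $H_1(N;\mathbb{Z})$ again has large rank, this passage can be iterated, yielding a descending chain of finite-index subgroups of $G$ whose deficiencies --- and hence, via $\dim_{\mathbb{F}_p} H_1(\,\cdot\,;\mathbb{F}_p) \ge \operatorname{def}(\,\cdot\,)$, whose first $\mathbb{F}_p$-homology --- grow at least linearly in the index.

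The crucial step is to upgrade this homology growth into an honest epimorphism onto $F_2$. Here I would regard $H_1(N;\mathbb{F}_p)$ as a module over the local group algebra $\mathbb{F}_p[P]$ (uniserial when $P$ is cyclic, with a transparent graded structure when $P$ is elementary abelian) and observe that the estimate above forces this module to be large while the images of the defining relators span only a thin submodule; a rank argument in the spirit of Golod--Shafarevich --- which is the technical heart of \cite{BP78} --- then locates, inside a further finite-index subgroup $N' \le N$, two Schreier generators such that every defining relator of $N'$, after deleting all other generators, freely reduces to the empty word. Proposition \ref{Prop:triv} then gives $N' \twoheadrightarrow F_2$, and since $[G:N'] < \infty$ the group $G$ is large.

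I expect this last step to be the main obstacle: the Reidemeister--Schreier bookkeeping only bounds the deficiency, and hence the $\mathbb{F}_p$-homology, of finite-index subgroups from below, whereas extracting from such a bound an actual surjection onto a non-abelian free group is precisely where the module-theoretic input of \cite{BP78} (alternatively, modern homology-growth criteria for largeness) is required; everything preceding it is routine manipulation of presentations and covering spaces.
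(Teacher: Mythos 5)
The paper offers no proof of this statement: it is imported wholesale from \cite{BP78} and used as a black box, so your sketch is really an attempt to reprove Baumslag--Pride. The routine parts are correct: the abelianization count giving $\operatorname{rank}_{\mathbb{Z}}H_1(G;\mathbb{Z})\ge n-m\ge 2$, the Reidemeister--Schreier/Euler-characteristic computation showing that a normal subgroup $N$ of index $j$ inherits a presentation of deficiency at least $j(n-m-1)+1\ge j+1$, and the consequent linear growth of $\dim_{\mathbb{F}_p}H_1(\,\cdot\,;\mathbb{F}_p)$ along a chain of finite-index subgroups.

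The genuine gap is the step you yourself flag, and it cannot be closed along the route you propose. First, reducing to homology growth discards too much information: linear growth of $\dim_{\mathbb{F}_p}H_1$ in the index (a positive mod-$p$ homology gradient) is not a formal consequence-producer of largeness --- turning such growth into a finite-index subgroup surjecting onto $F_2$ is itself a hard theorem requiring extra hypotheses, and Golod--Shafarevich-type inequalities are a tool for proving infiniteness of pro-$p$ completions, not for manufacturing free quotients. Second, the proposed endgame --- finding two Schreier generators of some $N'$ such that every relator freely reduces to the empty word after deleting the others, so that Proposition \ref{Prop:triv} applies --- is essentially a restatement of the desired conclusion; nothing forces a Reidemeister--Schreier presentation to have this very special shape, and no mechanism is offered to locate such a pair. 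The actual argument of \cite{BP78} is combinatorial rather than homological: after a Nielsen change of generators one arranges that a single generator $x_n$ has exponent sum zero in every relator, passes to the kernel $N_k$ of the map $G\to\mathbb{Z}/k$ sending $x_n$ to $1$ and the remaining generators to $0$, and observes that for $k$ large compared with the relator lengths each rewritten relator involves only Schreier generators from a short window of the cyclic cover; this exhibits a quotient of $N_k$ that is a nontrivial free product, whence a surjection onto $F_2$. That structural use of the presentation is the heart of the theorem and is absent from your sketch, which as written amounts to ``do the covering-space bookkeeping and then invoke \cite{BP78} for the decisive step.''
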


    \begin{proposition}
    [\cite{St83}]
        A group of deficiency $1$ such that one of the relators is a proper power is large.
    \end{proposition}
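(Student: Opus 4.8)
I would prove this by reducing to the Baumslag--Pride theorem quoted just above (deficiency $\geq 2$ implies large), the point being to exhibit a finite-index subgroup of deficiency at least $2$. Note that largeness passes up from finite-index subgroups: if $H$ has finite index in $G$ and $H$ has a finite-index subgroup $K$ surjecting onto $F_2$, then $K$ has finite index in $G$ as well, so $G$ is large. Thus it suffices to find a subgroup of finite index in $G$ admitting a presentation with at least two more generators than relators.

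First I would normalize the proper power. If the proper-power relator is $w^m$ with $m \geq 2$, pick a prime $p \mid m$ and write $w^m = (w^{m/p})^p$; so we may assume $G = \langle x_1,\ldots,x_n \mid v^p, r_2,\ldots,r_{n-1}\rangle$ with $p$ prime, a presentation with $n$ generators and $n-1$ relators, and in $G$ the element $v$ has order dividing $p$. Such a presentation also forces $H_1(G;\mathbb{Z})$ to have rank $\geq 1$, hence $H_1(G;\mathbb{Z}/p)\neq 0$, so $G$ surjects onto $\mathbb{Z}/p$; more generally each quotient $G/G_k$ by a term of the lower $p$-central series ($G_1 = G$, $G_{k+1} = [G,G_k]G_k^p$) is a finite $p$-group.

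The key computation is the following. Suppose $\psi\colon G \twoheadrightarrow Q$ is a surjection onto a finite group with $\psi(v)\neq 1$, and put $H = \ker\psi$, of index $j=|Q|$. In the action of $G$ on the cosets $G/H$, the element $v$ acts as a permutation of order dividing $p$ with at least one nontrivial orbit; since $p$ is prime, every orbit has size $1$ or $p$ and at least one has size $p$. Choosing a Schreier transversal that is a union of blocks $\{t, tv, \ldots, tv^{p-1}\}$ along the size-$p$ orbits, Reidemeister--Schreier gives $H$ a presentation with $1+j(n-1)$ generators and $j(n-1)$ relators (balanced, deficiency $1$); but on each size-$p$ orbit the $p$ translates of the relator $v^p$ coming from $t, tv, \ldots, tv^{p-1}$ all rewrite to one and the same relator, which simply sets a single Schreier generator equal to $1$. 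Using it to eliminate that generator removes one generator and one relator, and the remaining $p-1$ copies become trivial and are discarded; so each size-$p$ orbit improves the deficiency by $p-1 \geq 1$. Hence $H$ has deficiency $\geq 2$, so $H$ is large by \cite{BP78}, and therefore $G$ is large.

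What remains --- and this is the main obstacle --- is to produce a finite quotient of $G$ in which $v$ survives. If $v \notin G_k$ for some $k$ we simply take $Q = G/G_k$ and are done. The delicate case is when $v$ lies in the intersection of the lower $p$-central series, i.e. $v$ dies in every finite $p$-quotient (recall $G$ need not be residually-$p$, nor even residually finite); this is the genuinely hard point treated in \cite{St83}. One natural route is a recursive descent: if $v$ maps to $0$ in $\mathbb{Z}/p$, then $v$ lies in $N = \ker(G \to \mathbb{Z}/p)$, which by Reidemeister--Schreier again has a deficiency-$1$ presentation in which the copies of $v^p$ reappear as genuine proper $p$-th powers $(tvt^{-1})^p$, and one argues, via a finer analysis of how these elements sit in the lower $p$-central series, that after finitely many such descents the proper-power element becomes visible in a finite quotient (alternatively, one runs the counting argument directly against a single, carefully chosen finite quotient). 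Everything else is routine bookkeeping with Reidemeister--Schreier.
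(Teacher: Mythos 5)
The paper does not prove this proposition at all --- it is quoted verbatim from \cite{St83} (``We now cite some results''), so there is no in-paper argument to compare yours against; your proposal has to stand on its own. Its first half does: reducing to prime exponent, noting that largeness passes up along finite-index inclusions, and the Reidemeister--Schreier orbit count showing that if $v$ acts with a size-$p$ orbit on $G/H$ then the $p$ rewrites of $v^p$ along that orbit are cyclic permutations (hence conjugates) of a single relator, so the deficiency of $H$ jumps to at least $1+(p-1)\geq 2$ and Baumslag--Pride applies. (One detail is off: that common relator is a word of length about $p\,|v|$ in the Schreier generators, not a relator ``setting a single Schreier generator equal to $1$''; but the count --- $p$ relators collapse to one --- is what matters and is right.) This is indeed the standard easy half of the theorem.

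The genuine gap is the step you yourself flag: producing a finite quotient of $G$ in which $v$ survives, equivalently a finite-index $H$ with $v\notin\mathrm{core}(H)$. A deficiency-$1$ group need not be residually finite, and nothing in the presentation prevents $v$ from dying in every finite quotient, so this is not a technicality --- it is precisely the content that separates Stöhr's theorem from the Baumslag--Pride counting argument. Your proposed ``recursive descent'' into $N=\ker(G\to\Z/p)$ does not close it: once $v\in N$, all orbits on $G/N$ have size $1$, so $N$ again has deficiency exactly $1$ (now with $p$ proper-power relators), and you give no invariant that decreases, hence no reason the descent terminates rather than recurring forever. As written, your argument proves the proposition only for groups in which the proper-power element is detected by some finite quotient; the remaining case is asserted, not proved, and would have to be supplied (as in \cite{St83}) by a genuinely different, essentially homological, argument.
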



    Now, for our counterexample, consider the group $G := \Z \ast \Z/2 \cong \langle a,b | b^2 \rangle$, for which the latter holds.
    This group can be expressed as a pushout of the groups $\Z$ and $\Z/2$ through the trivial group.
    Take $R = \Z/p\Z$, $p$ odd prime, the commutation of the $R$-completion and colimits gives that
    $$
    (\Z/p\Z)_{\infty} K(G,1) = (\Z/p\Z)_{\infty} K(\Z,1)= K(\widehat{\Z}_{\Z/p\Z},1).
    $$
    This proves that $K(G,1)$ is $R$-good in this case.

    \printbibliography

    \bigskip\noindent
    
	Ramón Flores
	
	Departamento de Geometr\'{i}a y Topolog\'{i}a, Universidad de Sevilla-IMUS
	
	E-41012 Sevilla, Spain, e-mail: {\tt ramonjflores@us.es}
	
	\bigskip\noindent

    Jaime Benabent Guerrero

    Departamento de Geometr\'{i}a y Topolog\'{i}a, Universidad de Sevilla-IMUS

    E-41012 Sevilla, Spain, e-mail: {\tt jbenabent@us.es}
    
\end{document}